\documentclass[11pt,reqno]{amsart}
\usepackage[T1]{fontenc}
\usepackage{lmodern}
\usepackage{amsmath,amssymb,amsthm,mathtools}
\usepackage[letterpaper,textwidth=6.25in,textheight=9in,centering]{geometry}
\usepackage{microtype}
\usepackage[hidelinks]{hyperref}
\hypersetup{
  pdftitle={Indiscernible extraction beyond stability},
  pdfsubject={Conditional type bounds, ZFC extraction, and weak compactness}
}
\newtheorem{theorem}{Theorem}[section]
\newtheorem{lemma}[theorem]{Lemma}
\newtheorem{corollary}[theorem]{Corollary}
\newtheorem{fact}[theorem]{Fact}
\theoremstyle{definition}
\newtheorem{definition}[theorem]{Definition}
\AddToHook{env/theorem/begin}{\samepage}
\AddToHook{env/lemma/begin}{\samepage}

\DeclareMathOperator{\tp}{tp}
\DeclareMathOperator{\cf}{cf}
\DeclareMathOperator{\Aut}{Aut}

\newcommand{\res}{\mathbin{\upharpoonright}}
\newcommand{\concat}{\mathbin{{}^\frown}}
\newcommand{\bdd}{\operatorname{bdd}^{\mathrm u}}
\newcommand{\C}{\mathfrak C}
\newcommand{\EM}{\mathrm{EM}}

\def\forkindep{\mathrel{\raise0.2ex\hbox{\ooalign{\hidewidth$\vert$\hidewidth\cr\raise-0.9ex\hbox{$\smile$}}}}}
\newcommand{\indbu}{\forkindep^{\mathrm{bu}}}

\title{Indiscernible extraction for theories with bounded $k$-splitting}
\author{Connor Watson}
\date{}

\begin{document}
\begin{abstract}
Hanson introduced the class of theories with bounded $k$-splitting, a higher-arity notion of stability, and proved an indiscernible extraction result for those theories assuming the existence of a $k$-ineffable cardinal. In this article, we show that the conclusion of Hanson's theorem is equivalent to the weaker assertion that there exists a weakly compact cardinal, as well as prove additional indiscernible extraction results for these theories in ZFC. Finally, we show that Hanson's construction of total bounded ultraimaginary Morley sequences in theories with bounded $k$-splitting follows from ZFC.
\end{abstract}
\maketitle
\section*{Introduction}

There has been a multitude of efforts in model theory to introduce higher-arity generalizations of stability, the most well-known classes of these theories being the class of $\mathrm{NIP}_k$ theories introduced by Shelah in \cite{ShelahNIPk} and the class of $\mathrm{(N)FOP}_k$ theories introduced by Terry and Wolf in \cite{TW}. A ``higher-arity generalization of stability'' should be a natural generalization of both stability and $k$-arity.\footnote{A theory $T$ is \textit{k-ary} if every formula is equivalent modulo $T$ to a Boolean combination of formulas with at most $k$ variables.} Recently, in \cite{HansonBoundedksplitting}, Hanson introduced a new higher-arity notion of stability known as \textit{bounded k-splitting}. We will review the definition and give some motivation for the concept in Section 1, but for a more thorough introduction to the notion, as well as its relationship to similar notions already introduced, we refer the reader to \cite{HansonBoundedksplitting}.

In \cite{HansonBoundedksplitting}, assuming the existence of a \textit{k-ineffable cardinal}, Hanson proves an indiscernible extraction theorem for theories with bounded $k$-splitting just as Shelah did for stable theories in \cite{ShelahClassification}. He asks whether this large cardinal assumption can be improved, and whether a potentially weaker indiscernible extraction result can be proven in ZFC. Here, we answer both questions in the affirmative. Furthermore, Hanson gives an application by showing that, under the same large cardinal property, theories with bounded $k$-splitting have certain kinds of \textit{bu-Morley sequences}. We also show that the large cardinal assumptions here are not needed, either.

More specifically, say that the partition relation $\kappa\to(\lambda)_{T,\gamma,B}$ holds if for any sequence of $\gamma$-tuples of length $\kappa$ in a model of $T$, there exists a subsequence of length $\lambda$ which is indiscernible over $B$. The following is shown in \cite{HansonBoundedksplitting}:

\begin{theorem}[Hanson] If $T$ has bounded $k$-splitting, $\gamma$ is any ordinal, and $\lambda>|T|+|B|+|\gamma|$ is a $k$-ineffable cardinal, then $\lambda\to(\lambda)_{T,\gamma,B}.$
\end{theorem}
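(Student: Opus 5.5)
Since the paper has not yet stated the definition of bounded $k$-splitting, I take it in Hanson's form: for every formula $\varphi$ and every $k$-type configuration, the number of ways a type over a set can split over $k$-element subsets is uniformly bounded, so that types over large sets are controlled by their restrictions to $k$-element pieces up to boundedly many variations. Recall also that $\lambda$ is $k$-ineffable if for every colouring $F$ assigning to each $s\in[\lambda]^k$ a subset $F(s)\subseteq \min(s)$, there is a stationary (hence unbounded) $H\subseteq\lambda$ and a single $A\subseteq\lambda$ with $F(s)=A\cap\min(s)$ for all $s\in[H]^k$. The plan is to imitate Shelah's proof of indiscernible extraction for stable theories: first find a large set on which types become ``definable''/nonsplitting over small initial segments, then homogenize.

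\textbf{Step 1 (reduction to a small base).} Let $(a_\alpha)_{\alpha<\lambda}$ be the sequence of $\gamma$-tuples. Since $\lambda$ is inaccessible, there are $\lambda$ many $\alpha$ with $\cf(\alpha)$ uncountable and the set $A_\alpha=B\cup\{a_\beta:\beta<\alpha\}$ of size $<\lambda$. Using $|T|+|B|+|\gamma|<\lambda$, for each $\alpha$ the type $\tp(a_\alpha/A_\alpha)$ can be coded as a subset of $\alpha$ (via a bijection $\lambda\times\text{formulas}\to\lambda$ with closure points). By Fodor/pressing-down on a stationary set of closure points we find stationary $S$ such that the relevant small parameter sets witnessing the bounded $k$-splitting bound stabilize.

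\textbf{Step 2 (applying ineffability).} For $s=\{\alpha_0<\dots<\alpha_{k-1}\}\in[S]^k$, let $F(s)\subseteq\alpha_0$ code $\tp(a_{\alpha_1}\cdots a_{\alpha_{k-1}}/A_{\alpha_0})$ together with the splitting data, i.e. how $\tp(a_{\alpha_1}\ldots/ A_{\alpha_0}a_{\alpha_0})$ relates to the tuple $a_{\alpha_0}$. $k$-ineffability yields stationary $H$ and a global code $A$, meaning that all $k$-subsets of $H$ realize one coherent ``limit type'' $p$ over $A_{\lambda}$ restricted appropriately. Iterating over $k$-many coordinates (or using $k$-ineffability directly for the $k$-dimensional pattern) gives that for $s\in[H]^k$, the type of $a_s$ over earlier elements is the restriction of a single global object.

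\textbf{Step 3 (from $k$-homogeneity to full indiscernibility).} This is where bounded $k$-splitting is used and where I expect the main obstacle: we must show that homogeneity on $k$-subsets propagates to all finite $n$-subsets. The idea is induction on $n$: for $s\in[H]^{n}$ with $n>k$, write $\tp(a_s/B)$ via the types of the last element over the previous ones; bounded $k$-splitting says this type does not split beyond its behaviour on $k$-subsets in more than boundedly many ways, so the global code from Step 2 determines it, yielding that all increasing $n$-tuples from $H$ have the same type over $B$. To handle the bounded but nonzero amount of splitting, I would refine $H$ by a further pressing-down argument (the number of splitting patterns is $<\lambda$), keeping $H$ stationary at each of the countably many stages of the induction and intersecting — requiring care since countable intersections of stationary sets need not be stationary; I would instead fold all $n$ into a single colouring in Step 2, coding all formulas at once, so only one application of ineffability is needed. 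The resulting $H$ has size $\lambda$, giving $\lambda\to(\lambda)_{T,\gamma,B}$.
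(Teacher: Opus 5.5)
There is a genuine gap, and it sits in Step 3, where the actual content of the theorem lies. Step 3 rests on a misremembered definition. Bounded $k$-splitting does not say that types are ``controlled by their restrictions to $k$-element pieces up to boundedly many variations.'' It says that every $k$-partitioned type $\tp(a_0;\ldots;a_{k-1}/A)$ fails to $k$-split over some small $C\subseteq A$; Fact~\ref{fact:smallbase} gives $|C|\leq|T|+|\gamma|$. Here $k$-splitting means that some $b,b'$ from $A$ agree over each ``omit one block'' set $Ca_0\cdots\widehat{a_i}\cdots a_{k-1}$ but not over $Ca_0\cdots a_{k-1}$. Consequently:
\begin{itemize}
\item There is no ``bounded nonzero amount of splitting'' to press down on.
\item No further refinement of $H$ is needed.
\item Folding everything into one colouring cannot help, because a colouring of $[\lambda]^k$ only sees $k$ elements at a time and so cannot control $n$-types for $n>k$.
\end{itemize}
Step 1 is also misdirected. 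For $k\geq2$ the $1$-types $\tp(a_\alpha/A_\alpha)$ carry no useful non-splitting information: the random graph is binary but unstable, so these types split over every small set. There is nothing to stabilize before you have a $k$-partitioned type in hand.

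Your Step 2 is fine. Coding $\tp(a_s/A_{\min s})$ at closure points and applying $k$-ineffability yields a stationary $H$ on which the sequence is $k$-end-homogeneous over $B$, together with a single limit type $p$ over $B\cup a_{<\lambda}$. This is a legitimate one-shot substitute for the paper's route. The paper instead proves Theorem~\ref{thm:weak} from weak compactness, which suffices since $k$-ineffable cardinals are weakly compact: it gets end-homogeneity from the tree property and iterates by adjoining a realized limit tuple.

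What is missing is the argument of Lemma~\ref{lem:tail}, which proceeds in two stages.
\begin{enumerate}
\item Apply Fact~\ref{fact:smallbase} to the one $k$-partitioned limit type $p$, getting $C$ with $|C|<\lambda$ over which $p$ does not $k$-split. By regularity of $\lambda$, choose $\beta$ with $C\subseteq A_\beta$. Then for every increasing $k$-tuple $v$ from $H$ above $\beta$, the type $\tp(v/A_{\min v})=p\res A_{\min v}$ does not $k$-split over $D=A_\beta$.
\item Prove $m$-indiscernibility over $D$ by induction on $m\geq k$, without shrinking the set. Take an increasing $(m+1)$-tuple $uv$, where $v=v_0\cdots v_{k-1}$ is its last $k$ blocks, and put $u_0=e_0\cdots e_{m-k}$. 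By $m$-indiscernibility, $u\equiv_{Dv_0\cdots\widehat{v_j}\cdots v_{k-1}}u_0$ for each $j<k$, which is exactly the hypothesis in the definition of $k$-splitting. Non-$k$-splitting then gives $uv\equiv_Du_0v$, and $k$-end-homogeneity gives $u_0v\equiv_De_0\cdots e_m$.
\end{enumerate}
The key mechanism is that $m$-indiscernibility supplies the ``omit one block'' equalities which non-$k$-splitting upgrades. Without it, your Step 3 does not establish $(m+1)$-indiscernibility.
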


Here, we show that this large cardinal hypothesis can be weakened, and also that two weaker indiscernible extraction results can be proven just in ZFC. In particular, we show the following:

\begin{theorem}
    Suppose $T$ has bounded $k$-splitting, $B$ is a parameter set of size at most $\lambda,$ and $\gamma$ is an ordinal. Then,

    \begin{enumerate}
        \item If $\lambda>|T|+|B|+|\gamma|$ is weakly compact, then $\lambda\to(\lambda)_{T,\gamma,B}$.
        \item If $\lambda\geq|T|+|B|+|\gamma|$, then $\beth_k(\lambda)^+\to(\lambda^+)_{T,\gamma,B}.$
        \item If $\lambda^{|T|}=\lambda$ and $1\leq n < \omega$, then $\beth_{k-1}(\lambda)^+\to(\lambda^+ + (k-1))_{T,n,B}.$
    \end{enumerate}

    Moreover, the hypothesis that $\lambda$ is weakly compact in (1) is optimal.
\end{theorem}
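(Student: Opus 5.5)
The approach is to reduce indiscernibility to homogeneity for a single coloring of $k$-element (or $(k+1)$-element) subsets, and then use bounded $k$-splitting to propagate that homogeneity to all finite arities. Throughout, let $(a_\alpha)_{\alpha<\kappa}$ be the given sequence of $\gamma$-tuples. Color each increasing $m$-tuple of indices by $\tp(a_{\alpha_1}\dots a_{\alpha_m}/B)$. The number of colors is at most $2^{|T|+|B|+|\gamma|}$, which is $<\lambda$ whenever $\lambda$ is a strong limit exceeding $|T|+|B|+|\gamma|$.

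The key lemma, extracted from the definition of bounded $k$-splitting in Section 1, is a \emph{propagation lemma}: along a sequence whose $k$-types over $B$ are homogeneous, and which is suitably ``end-homogeneous,'' the type over $B$ of any increasing $n$-tuple with $n>k$ is determined by the types of its $k$-subtuples. It should also follow that the type over $B$ of $a_\beta$ over $\{a_\alpha:\alpha<\beta\}$ is controlled by boundedly many $k$-subsets of predecessors. I would prove this by induction on $n$. For the step, use bounded $k$-splitting to argue as follows: if two increasing $(n+1)$-tuples agreed on all $k$-subtypes but differed in type, then iterating along the sequence would produce unboundedly many splitting witnesses, contradicting boundedness. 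I expect this lemma to be the main obstacle. The delicate point is formulating exactly which homogeneity hypothesis, ``$k$-homogeneous plus end-homogeneous,'' suffices, since the induction must avoid requiring homogeneity for all arities at once. That simultaneous homogeneity is what forced Hanson to assume $k$-ineffability.

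For (1), let $\lambda$ be weakly compact. Then $\lambda\to(\lambda)^m_{<\lambda}$ holds for each fixed $m$. Apply this once, with $m$ taken to be $k$ or $k+1$ as the propagation lemma requires, to get a subsequence of length $\lambda$ that is homogeneous for the coloring of $m$-tuples. The propagation lemma then upgrades this to full indiscernibility over $B$.

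For (2), run an Erdős--Rado tree argument. Build a tree of end-homogeneous sequences of length $\beth_k(\lambda)^+$ in which a branch point records the type over $B$ of a new element over the previously chosen ones. By bounded $k$-splitting that type is determined by data on $k$-subsets, which shrinks the number of immediate successors to $\beth_{k-1}(\lambda)$ rather than $2^{\beth_k(\lambda)}$. Iterating $k$ times, as in the proof of $\beth_k(\lambda)^+\to(\lambda^+)^{k+1}_\lambda$, gives an end-homogeneous sequence of length $\lambda^+$ that is homogeneous on $k$-types. The propagation lemma then finishes the argument. For (3), use the same argument one level lower, with $n$-tuples and $\lambda^{|T|}=\lambda$ to count types. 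This yields end-homogeneity only up to the last $k-1$ stages, so we get a sequence of length $\lambda^+$ together with $k-1$ extra elements appended at the end, namely the tuples where homogeneity was achieved only by pigeonhole.

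For optimality, suppose $\lambda$ is inaccessible but not weakly compact; if $\lambda$ is not strongly inaccessible, simpler counting counterexamples apply. Take $c:[\lambda]^2\to 2$ with no homogeneous set of size $\lambda$. The theory of the random graph is binary, so it has bounded $2$-splitting, and I would check that it has bounded $k$-splitting for all $k\ge 2$. Realize $\lambda$ vertices $(a_\alpha)$ in a model of this theory with $a_\alpha E a_\beta$ if and only if $c(\{\alpha,\beta\})=1$. Any indiscernible subsequence is $c$-homogeneous, so $\lambda\not\to(\lambda)_{T,1,\emptyset}$.
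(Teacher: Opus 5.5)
You have a genuine gap: the ``propagation lemma'' does all the model-theoretic work in your plan, but it is never given a hypothesis that is both true and obtainable. In (1) you obtain no end-homogeneity at all. The form you then invoke (a subsequence homogeneous for the coloring of $m$-tuples by types over $B$, $m\in\{k,k+1\}$) is false even for stable theories. In an infinite-dimensional $\mathbb{F}_2$-vector space, concatenate blocks $(e^1_\xi,\ldots,e^{k+1}_\xi,\,e^1_\xi+\cdots+e^{k+1}_\xi)$, $\xi<\lambda$, built from independent vectors. Every increasing $(k+1)$-tuple is linearly independent, so all such tuples have the same type. But a $(k+2)$-tuple is dependent exactly when it is a whole block, so no final segment is indiscernible, and your Ramsey step may return this entire sequence.

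What the paper uses instead is $k$-end-homogeneity, i.e.\ \eqref{eq:endhom} with $r=k$. It makes the types of $k$-tuples over initial segments cohere into a limit type over $Ba_{<\theta}$. Fact~\ref{fact:smallbase} gives a small $C$ over which this type does not $k$-split, and $\cf(\theta)>\lambda$ puts $C$ inside some $A_\beta$. An induction on arity then makes the final segment from $\beta$ on indiscernible (Lemma~\ref{lem:tail}): match the $k$ partial types by $m$-indiscernibility, glue by non-$k$-splitting, and normalize by $k$-end-homogeneity. End-homogeneity concerns types over unboundedly many parameters, so no single application of $\lambda\to(\lambda)^m_{<\lambda}$ produces it. In (1) the paper applies the tree property to the Erd\H{o}s--Rado partition tree, whose levels have size $2^{\lambda+|\alpha|}<\kappa$. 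It iterates this $k$ times, each time realizing the limit type $d$ of the current end-homogeneous sequence and recursing over $Bd$.

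Your account of (2) misplaces the hypothesis. Bounded $k$-splitting does not reduce the number of types of a new element over earlier ones: the random graph is binary but has $2^\mu$ one-types over sets of size $\mu$. So there is no shrinking of successors to $\beth_{k-1}(\lambda)$. In the paper the Erd\H{o}s--Rado stage is valid in every theory: $k$ nested constructions, each costing one exponential and contributing one terminal term, turn length $\beth_k(\lambda)^+$ into a $k$-end-homogeneous sequence of order type $\lambda^++k$ (Lemma~\ref{lem:ramification}). Bounded $k$-splitting is used only in Lemma~\ref{lem:tail}, and your target there (end-homogeneous plus homogeneous on $k$-types) is weaker than what that lemma needs.

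For (3) you give no mechanism. The paper's steps are:
\begin{enumerate}
\item Take a $(k-1)$-end-homogeneous sequence of order type $\lambda^++(k-1)$ and let $c$ be its last $k-1$ terms.
\item By \eqref{eq:restrictions}, non-$k$-splitting of $\tp(a_\alpha;c_1;\ldots;c_{k-1}/A_\alpha)$ over $C_\alpha$ becomes ordinary non-splitting of $\tp(a_\alpha/A_\alpha c)$ over $C_\alpha c$.
\item Fodor's lemma, together with $\lambda^{|T|}=\lambda$, fixes $C_\alpha$ on a stationary set of points of cofinality $|T|^+$.
\item Lemma~\ref{lem:count} leaves at most $\lambda$ possible types over $A_\delta c$, so the type is constant on a stationary set.
\item Fact~\ref{lem:ordinary} finishes.
\end{enumerate}
The $k-1$ extra terms are this tail $c$, not pigeonhole leftovers. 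Your optimality argument is the paper's. It needs no separate non-inaccessible case, since $\lambda\to(\lambda)^2_2$ for uncountable $\lambda$ already implies weak compactness.
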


\noindent\textbf{AI statement.}\quad
No generative AI was used in the conception, generation, or writing of this paper.

\section{Preliminaries}

Throughout, $T$ denotes a complete infinite theory, and we work in a
sufficiently saturated monster $\C$.
For a sequence $(a_i)_{i<\eta}$, let $a_{<\alpha}$ denote the set of
coordinates occurring before $\alpha$, and let $a_I$ denote concatenation
in increasing index order for finite $I\subseteq\eta$.
The sequence is \emph{indiscernible over $B$} if
$a_I\equiv_Ba_J$ whenever $I,J\subseteq\eta$ are finite and have the
same size.

Shelah's original development of stability theory centers heavily around the notion of \textit{splitting}. Recall that a type $\mathrm{tp}(a/B)\in S_n(B)$ \textit{splits over} $C\subseteq B$ if there is a formula $\varphi(x,y)$ and $b,b'\in A$ with $b\equiv_C b'$ such that both $\varphi(a,b)$ and $\neg\varphi(a,b')$ are true. It is shown in \cite{ShelahClassification} that stability can be defined in terms of splitting:

\begin{fact}[Shelah]
$T$ is stable if and only if there is a cardinal $\lambda$ such that for any $a$ and $B$, there is a $C\subseteq B$ with $|C|\leq\lambda$ such that $\mathrm{tp}(a/B)$ does not split over $C$.   
\end{fact}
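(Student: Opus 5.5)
The plan is to prove the two directions separately. For the forward direction I will use definability of types. For the converse I will count types and use the characterization of stability as $\mu$-stability for some $\mu$.

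\emph{Stable $\Rightarrow$ bounded non-splitting base.} Take $\lambda=|T|$. I will invoke the standard fact that $T$ is stable if and only if every type over every set is definable over that set. Concretely, for $p=\mathrm{tp}(a/B)$ and each formula $\varphi(x,y)$, there is a formula $d_p\varphi(y,c_\varphi)$ with $c_\varphi\in B$ such that for all $b\in B$ we have $\models\varphi(a,b)\iff\models d_p\varphi(b,c_\varphi)$. Let $C=\bigcup_\varphi c_\varphi$, so $|C|\le|T|$.

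Now suppose $b,b'\in B$ with $b\equiv_C b'$. Then $d_p\varphi(b,c_\varphi)\leftrightarrow d_p\varphi(b',c_\varphi)$, because $c_\varphi\subseteq C$. Hence $\varphi(a,b)\leftrightarrow\varphi(a,b')$, so $\mathrm{tp}(a/B)$ does not split over $C$.

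The only delicate point here is that the definitions must have parameters in $B$ itself, rather than in $\mathrm{acl}^{\mathrm{eq}}(B)$. This is fine because we only require the definition to be correct on tuples from $B$.

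\emph{Converse.} Suppose $\lambda$ witnesses the condition, and fix a set $B$. Each $p=\mathrm{tp}(a/B)$ is determined by three pieces of data:
\begin{itemize}
\item a set $C\subseteq B$ with $|C|\le\lambda$ over which $p$ does not split;
\item the type $\mathrm{tp}(a/C)$;
\item for each formula $\varphi(x,y)$ and each $q\in S_y(C)$ realized in $B$, the common truth value of $\varphi(a,b)$ for $b\in B$ realizing $q$.
\end{itemize}
Non-splitting over $C$ is exactly what makes this common truth value well defined.

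Counting these data gives
\[
|S(B)|\le |B|^{\lambda}\cdot 2^{\lambda+|T|}\cdot 2^{|T|\cdot 2^{\lambda+|T|}}.
\]
Choose $\mu=\bigl(2^{2^{\lambda+|T|}}\bigr)^{\lambda}$. Then $\mu^\lambda=\mu$ and $\mu\ge 2^{2^{\lambda+|T|}}$, so for $|B|\le\mu$ the bound gives $|S(B)|\le\mu$. Thus $T$ is $\mu$-stable, hence stable.

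The step I expect to need most care is the forward direction's reliance on definability over arbitrary sets. If only definability over models is available, I would instead take a model $M\supseteq B$ and the definitions over $M$. I would then replace $C$ by a small subset of $B$ using local character of forking: choose $C\subseteq B$ with $|C|\le|T|$ such that $\mathrm{tp}(a/B)$ does not fork over $C$. Showing that such a type does not split over $C$ needs a further adjustment, namely passing to strong types over $C$; I would handle this by the definability-over-sets formulation above.
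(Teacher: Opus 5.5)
The paper gives no proof of this Fact; it is quoted from Shelah, so there is nothing in the text to compare against. Your argument is correct and is the standard one.

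\textbf{Converse.} This is precisely the $k=1$ case of the counting the paper itself does in Lemma~\ref{lem:count}. A type not splitting over a small $C\subseteq B$ is determined by its values on one representative of each $\equiv_C$-class of $B$-tuples. That gives your bound and hence $\mu$-stability. Take $\lambda$ infinite, which is harmless, so that your $\mu$ satisfies $\mu^\lambda=\mu$ and $\mu\geq 2^{2^{\lambda+|T|}}$.

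\textbf{Forward direction.} It rests entirely on definability of types over \emph{arbitrary} sets in the weak sense: parameters in $B$, correct on $B$-tuples. This is indeed a theorem of Shelah, so the hedging in your last paragraph is unnecessary. To make it self-contained, take $p=\tp(a/B)$ and a formula $\varphi(x,y)$.
\begin{enumerate}
\item Stability gives $R(x=x,\varphi,2)<\omega$.
\item Choose a finite conjunction $\chi(x)\in p$ of $\varphi$-instances and their negations of minimal rank $n$.
\item Then for $b\in B$, $\varphi(x,b)\in p$ iff $R(\chi(x)\wedge\varphi(x,b),\varphi,2)\geq n$. This holds because $R(\chi)=n$ forbids both $\chi\wedge\varphi(x,b)$ and $\chi\wedge\neg\varphi(x,b)$ from having rank $\geq n$.
\item The condition $R(\chi(x)\wedge\varphi(x,b),\varphi,2)\geq n$ is a single formula in $b$ over the parameters of $\chi$: it asserts the existence of a binary tree of height $n$.
\end{enumerate}

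\textbf{Your fallback.} The route via local character of forking would not work by itself. Non-forking over $C$ only gives $\varphi(a,b)\leftrightarrow\varphi(a,b')$ for $b,b'$ of the same \emph{strong} type over $C$. For a counterexample, let $E$ be an equivalence relation with exactly two infinite classes. Take $b,b'$ in different classes and $a\mathrel{E}b$ with $a\neq b$. Then $\tp(a/bb')$ does not fork over $\emptyset$, but it splits over $\emptyset$.

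\textbf{Finite tuples.} The statement concerns finite tuples $a$, as in the paper's $S_n(B)$. Your choice $\lambda=|T|$ uses this, and no uniform $\lambda$ exists for tuples of unbounded length.
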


It is this formulation of stability that motivates the relevant higher-arity notion defined below. As usual, given a list $a_0,...,a_{k-1}$, we write $a_0,...,\hat{a}_i,...,a_{k-1}$ to mean the same list with $a_i$ removed.

\begin{definition}
A \emph{$k$-partitioned type} is a type with its variables partitioned into $k$ sets, written $\tp(a_0;\ldots;a_{k-1}/A)$.
For $C\subseteq A$, it \emph{$k$-splits over $C$} if some finite
same-length tuples $b,b'$ from $A$ satisfy
$$
 b\equiv_{Ca_0\cdots\widehat{a_i}\cdots a_{k-1}}b'\quad(i<k),
 \qquad b\not\equiv_{Ca_0\cdots a_{k-1}}b'.
$$
A theory $T$ has \emph{$\lambda$-bounded $k$-splitting} if every such type
with finite blocks has some $C\subseteq A$, $|C|<\lambda$, over which
it does not $k$-split. $T$ has \emph{bounded $k$-splitting} it has $\lambda$-bounded $k$-splitting for some $\lambda$.
\end{definition}

It is easy to see that stability is equivalent to having bounded $1$-splitting, that $k$-arity implies bounded $k$-splitting, and that bounded $k$-splitting implies bounded $(k+1)$-splitting. Hence, this notion is actually a generalization of $k$-arity and stability. 

In \cite{ShelahClassification}, it is shown that $\lambda^+\longrightarrow(\lambda^+)_{T,n,B}$ at every $\lambda$
for which $T$ is $\lambda$-stable. The aim of this paper is to prove analogous results for theories with bounded $k$-splitting. 

The following notion in the $r=1$ case is implicit in \cite{ShelahClassification}.

\begin{definition}
For $1\leq r<\omega$, a sequence $(a_i)_{i<\eta}$ is
\emph{$r$-end-homogeneous over $B$} if
\begin{equation}\label{eq:endhom}
 a_I\equiv_{Ba_{<\alpha}}a_J
 \qquad\bigl(\alpha<\eta,\ I,J\in[\eta\setminus\alpha]^r\bigr).
\end{equation}

$(a_i)_{i<\eta}$ is \emph{end-homogeneous over $B$} if it is
$1$-end-homogeneous over $B$.
\end{definition}

The importance of this condition is that locating an indiscernible sequence amounts to non-splitting and end-homogeneity.

\begin{fact}[{\cite{ShelahClassification}}]\label{lem:ordinary}
If $(a_i)_{i<\eta}$ is end-homogeneous over $B$ and every
$\tp(a_i/Ba_{<i})$ does not split over $B$, then the sequence is
indiscernible over $B$.
\end{fact}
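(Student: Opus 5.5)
We prove that $a_I\equiv_B a_J$ for all finite $I,J\subseteq\eta$ with $|I|=|J|=n$, by induction on $n$. The case $n=0$ is trivial. For the inductive step it is convenient to prove a slightly stronger statement: for every $n$, and all increasing tuples $i_0<\dots<i_{n-1}$ and $j_0<\dots<j_{n-1}$ in $\eta$, we have $a_{i_0}\cdots a_{i_{n-1}}\equiv_B a_{j_0}\cdots a_{j_{n-1}}$. Fix $n\ge 1$ and two such tuples. Write $I'=\{i_0,\dots,i_{n-2}\}$ and $J'=\{j_0,\dots,j_{n-2}\}$ for the first $n-1$ indices, and put $\alpha=i_{n-1}$, $\beta=j_{n-1}$ for the last ones.

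The plan is to pass from $a_{I'}a_\alpha$ to $a_{J'}a_\beta$ in two moves.

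\textbf{Step 1: move the last element.} Let $m=\max(i_{n-2},j_{n-2})$, which only matters when $n\ge 2$. Choose any $\delta$ with $m<\delta$ and $\delta\le\min(\alpha,\beta)$; for instance $\delta=\min(\alpha,\beta)$ works, since $\alpha>i_{n-2}$ and $\beta>j_{n-2}$. End-homogeneity, i.e.\ condition~\eqref{eq:endhom} with $r=1$ applied at the cut $\delta'=m+1$ (or $0$ if $n=1$), gives $a_\alpha\equiv_{Ba_{<m+1}}a_\delta$ and $a_\beta\equiv_{Ba_{<m+1}}a_\delta$. Both $a_{I'}$ and $a_{J'}$ lie inside $a_{<m+1}$. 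Hence
\[
a_{I'}a_\alpha\equiv_B a_{I'}a_\delta
\quad\text{and}\quad
a_{J'}a_\beta\equiv_B a_{J'}a_\delta .
\]

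\textbf{Step 2: move the initial segment.} It remains to show $a_{I'}a_\delta\equiv_B a_{J'}a_\delta$. By the induction hypothesis, $a_{I'}\equiv_B a_{J'}$. Both tuples come from $a_{<\delta}$, and $\tp(a_\delta/Ba_{<\delta})$ does not split over $B$. By the definition of splitting, for every formula $\varphi(x,y)$ over $B$ we get $\varphi(a_\delta,a_{I'})\leftrightarrow\varphi(a_\delta,a_{J'})$. This is exactly $a_{I'}a_\delta\equiv_B a_{J'}a_\delta$. Chaining this with Step 1 completes the induction.

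The only delicate point is the choice of $\delta$. It must lie above every element of both $I'$ and $J'$, so that non-splitting applies with both tuples inside the domain $a_{<\delta}$. It must also satisfy $\delta\le\min(\alpha,\beta)$, so that end-homogeneity can be applied at a cut above $I'\cup J'$ that still lies below $\alpha$ and $\beta$. Taking $\delta=\min(\alpha,\beta)$ handles both requirements. One should also check the degenerate case where $\delta$ coincides with $\alpha$ or $\beta$; there the corresponding equivalence in Step 1 is trivial.

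Note that splitting is defined for types over a set $A$ that contains the parameters $b,b'$. Here we use it with $A=Ba_{<\delta}$ and $C=B$, where $b,b'$ are the finite tuples $a_{I'}$ and $a_{J'}$ (taken together with any parameters from $B$ that $\varphi$ uses).
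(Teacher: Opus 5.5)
Your two-move strategy is the right one, but the choice $\delta=\min(\alpha,\beta)$ is wrong. The justification you give (``since $\alpha>i_{n-2}$ and $\beta>j_{n-2}$'') does not yield $\delta>m=\max(i_{n-2},j_{n-2})$. Take $n=2$, $I=\{0,1\}$ and $J=\{5,6\}$. Then $\alpha=1$, $\beta=6$ and $m=5$, so the window $m<\delta\le\min(\alpha,\beta)$ is empty.

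With your $\delta=1$, both steps break:
\begin{itemize}
\item Step 2 needs $J'\subseteq\delta$, but $5\geq 1$, so non-splitting of $\tp(a_1/Ba_{<1})$ says nothing about $a_{J'}=a_5$.
\item Step 1 applies end-homogeneity at the cut $m+1=6$, which says nothing about $a_\alpha=a_1$. In fact the asserted equivalence $a_{J'}a_\beta\equiv_B a_{J'}a_\delta$ becomes $a_5a_6\equiv_B a_5a_1$. This is false for an increasing sequence in a dense linear order, even though such a sequence satisfies the hypotheses.
\end{itemize}
So your argument only covers the case where both last indices lie above all the earlier indices of both tuples, and that is not the general case.

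The fix is to drop the upper bound $\delta\le\min(\alpha,\beta)$. It is an artifact of using a single common cut for end-homogeneity. Take $\delta=\max(\alpha,\beta)$ and use two cuts:
\begin{itemize}
\item compare $a_\alpha$ with $a_\delta$ at the cut $i_{n-2}+1$ (both indices are at least $i_{n-2}+1$);
\item compare $a_\beta$ with $a_\delta$ at the cut $j_{n-2}+1$.
\end{itemize}
Use the cut $0$ in both places when $n=1$. This gives $a_{I'}a_\alpha\equiv_B a_{I'}a_\delta$ and $a_{J'}a_\beta\equiv_B a_{J'}a_\delta$. Now $\delta>m$, so $a_{I'}$ and $a_{J'}$ both lie in $a_{<\delta}$. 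Your Step 2 (the induction hypothesis plus non-splitting of $\tp(a_\delta/Ba_{<\delta})$ over $B$) then goes through verbatim. This is the standard argument behind the fact, which the paper cites from Shelah rather than proving.
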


\section{ZFC Results on indiscernibles}

We now prove two indiscernible extraction results for theories with bounded $k$-splitting in ZFC. Our first goal is the following theorem:

\begin{theorem}\label{thm:basic}
If $T$ has bounded $k$-splitting and $\lambda\geq{|T|}+|B|+|\gamma|$, then every sequence of
$\gamma$-tuples of length $\beth_k(\lambda)^+$ has a
$B$-indiscernible subsequence of order type $\lambda^+$.
\end{theorem}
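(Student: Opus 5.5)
The plan is to imitate Shelah's ZFC proof of $\lambda^+\to(\lambda^+)_{T,n,B}$ for stable $T$, which combines end-homogeneity (built by a tree argument) with non-splitting (secured by a pressing-down argument), as in Fact~\ref{lem:ordinary}. The role of non-splitting will be taken over by non-$k$-splitting. Fix $\mu\le|T|^+$ such that $T$ has $\mu$-bounded $k$-splitting; then $\mu\le\lambda^+$ and all relevant type-counts are bounded by $2^\lambda$. The expected shape is an Erdős–Rado argument in which bounded $k$-splitting saves one exponential. Naively, colouring $(k+1)$-sets by their types over $B$ needs $\beth_{k+1}$, since there are $2^\lambda$ colours. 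The point is that such a type should be determined by data about $k$-subsets over a small set, which is what lets $\beth_k(\lambda)^+$ suffice.

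\textbf{Step 1 (tree/end-homogeneity).} Starting from $(a_i)_{i<\beth_k(\lambda)^+}$, I run the standard Erdős–Rado tree construction, iterated $k$ times, each time passing to a subsequence that is one level more end-homogeneous. At stage $r\le k$ the goal is a subsequence of length $\beth_{k-r}(\lambda)^+$ that is $r$-end-homogeneous over $B$ in the sense of \eqref{eq:endhom}. Each stage is the usual pigeonhole over types: there are at most $2^{\beth_{k-r}(\lambda)}$ types of $r$-tuples over $B$ together with an initial segment. If this is done naively, the count at the last stage is too large by one exponential. So at each stage I will only record the type over $B\cup C$, where $C$ is a small subset of the earlier elements supplied by bounded $k$-splitting, instead of over the full initial segment.

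\textbf{Step 2 (fixing small bases).} For each index $i$, bounded $k$-splitting applied to the partitioned type $\tp(a_{i_0};\ldots;a_{i_{k-1}}/Ba_{<i_0})$ gives a set $C\subseteq Ba_{<i_0}$ with $|C|<\mu$ over which it does not $k$-split. I treat long tuples as in Hanson's setup, splitting them into finite subtuples and taking unions of $|\gamma|$ many small sets. On the stationary set of indices of cofinality $\ge\mu$ within a segment of length $\lambda^+$, Fodor's lemma lets me assume that $C$ is bounded below a fixed point and is therefore contained in a fixed set $B'=B\cup a_{<\delta}$ of size $\le\lambda$. The types over $B'$ then number at most $2^\lambda$, which fits the budget of the final stage.

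\textbf{Step 3 (indiscernibility).} I then prove a $k$-ary analogue of Fact~\ref{lem:ordinary}: if a sequence is $k$-end-homogeneous over $B'$ and the relevant $k$-partitioned types do not $k$-split over $B'$, then it is indiscernible over $B'$, hence over $B$. The proof is by induction on the length of $I,J$ in showing $a_I\equiv_{B'}a_J$. Non-$k$-splitting reduces the comparison of a tuple extended by $k$ new coordinates to comparisons that drop one of the $k$ coordinates. These are handled by the induction hypothesis and by $r$-end-homogeneity for $r<k$.

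The main obstacle is the interaction between Steps 1 and 2. The non-$k$-splitting bases must be chosen before the final homogenization, so that Step 1 only needs to count types over $B\cup C$. If Fodor cannot be applied uniformly to get a common base, my first fallback is to carry the bases $C$ as part of the data colored in the tree argument. This is harmless because there are at most $\lambda^{<\mu}\le 2^\lambda$ choices for each such base.
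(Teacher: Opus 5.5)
Your three-step skeleton is the right one. However, the obstacle you identify in Step~1 does not exist, and the modification you adopt to get around it would break Step~3. The unmodified iterated tree argument already fits the budget. By your own count, stage $r$ has at most $2^{\beth_{k-r}(\lambda)}=\beth_{k-r+1}(\lambda)$ nodes per level, against an input of length $\beth_{k-r+1}(\lambda)^+$. So $\beth_k(\lambda)^+$ yields a subsequence of length $\lambda^+$ that is $k$-end-homogeneous over $B$ in the full sense of \eqref{eq:endhom}. This is Lemma~\ref{lem:ramification}: extract end-homogeneity, adjoin the last element $d$ to the base, and recurse over $Bd$. Bounded $k$-splitting is needed only to show that $k$-end-homogeneity already yields indiscernibility (your Step~3), so no level beyond $k$ is ever paid for; that is the whole saving.

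Recording types only over $B\cup C$ gives $a_I\equiv_{BC}a_J$ rather than \eqref{eq:endhom}, and non-$k$-splitting cannot upgrade this. It only turns $\bar b\equiv_{Ca_{I\setminus\{i\}}}\bar b'$ (for all $i\in I$) into $\bar b\equiv_{Ca_I}\bar b'$, which presupposes control of the $(k-1)$-subtuples over the whole initial segment. The inductive step of Step~3 also needs the full version: it replaces the last $k$ blocks of a tuple by a fixed increasing $k$-tuple, and that uses end-homogeneity over the full initial segment. A counting of the kind you have in mind is what produces the further saving in Theorem~\ref{thm:main}, and it requires Lemma~\ref{lem:count} and $\lambda^{|T|}=\lambda$. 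Finally, bases chosen before homogenization depend on a whole $k$-tuple rather than on a single index, so there is no regressive function to feed to Fodor's lemma.

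Step~2 has a separate gap. Fodor's lemma needs stationarily many $\alpha<\lambda^+$ whose cofinality exceeds the size of the bases, which can be as large as $|T|+|\gamma|$. The hypothesis allows $\lambda=|T|+|\gamma|$ (for instance $\lambda=|T|$ with $\mu=|T|^+$), and then no $\alpha<\lambda^+$ qualifies.

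The paper's Lemma~\ref{lem:tail} avoids Fodor entirely. Once $(a_i)_{i<\lambda^+}$ is $k$-end-homogeneous, the type $p_\alpha=\tp(a_I/Ba_{<\alpha})$ for $I\in[\lambda^+\setminus\alpha]^k$ does not depend on $I$, and $p_\alpha\subseteq p_\beta$ for $\alpha<\beta$. Apply Fact~\ref{fact:smallbase} once, to the union $p$ over $Ba_{<\lambda^+}$. The resulting base has size at most $|T|+|\gamma|\le\lambda$, so it lies inside some $Ba_{<\beta}$ because $\cf(\lambda^+)>\lambda$. Then every $\tp(a_I/Ba_{<\min I})=p\res Ba_{<\min I}$ with $\min I\ge\beta$ does not $k$-split over $Ba_{<\beta}$. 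Your Step~3, applied to the tail from $\beta$ on, then finishes the proof.
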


We begin by citing a fact from \cite{HansonBoundedksplitting} on types not splitting in theories with bounded $k$-splitting that will be useful moving forward.

\begin{fact}[{\cite{HansonBoundedksplitting}}]\label{fact:smallbase}
If $T$ has bounded $k$-splitting, then every $k$-partitioned type
whose blocks have length $\gamma$ does not $k$-split over some subset
of its parameter domain of size at most ${|T|}+|\gamma|$.
\end{fact}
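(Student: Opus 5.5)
The plan is to first prove the statement for $k$-partitioned types with \emph{finite} blocks, with the bound $|T|$, by a stretching argument in the style of Shelah's proof for ordinary splitting. I would then pass to blocks of length $\gamma$ by taking a union over finite sub-partitions.

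\textbf{Normalization.} A witness to $k$-splitting over $C$ can be taken parameter-free. Suppose $b\not\equiv_{Ca_0\cdots a_{k-1}}b'$ via $\varphi(\bar a,b,c)\wedge\neg\varphi(\bar a,b',c)$ with $c\in C$. Replace $b,b'$ by $bc,b'c$: these are still tuples from $A$, the equivalences over $Ca_0\cdots\widehat{a_i}\cdots a_{k-1}$ are preserved, and the non-equivalence is now witnessed by a formula $\varphi(\bar x,y)$ without parameters. Consequently $k$-splitting over $C$ implies $k$-splitting over every $C'\subseteq C$, since equivalence over a larger set implies equivalence over a smaller one.

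\textbf{Finite blocks.} Suppose $\tp(\bar a/A)$ has finite blocks and $k$-splits over every $C\subseteq A$ with $|C|\le|T|$.
\begin{enumerate}
\item By induction on $\alpha<|T|^+$, choose $b_\alpha,b'_\alpha$ in $A$ and $\varphi_\alpha$ witnessing $k$-splitting over $C_\alpha=\bigcup_{\beta<\alpha}b_\beta b'_\beta$. This is possible because $|C_\alpha|\le|T|$.
\item By pigeonhole, $\varphi_\alpha=\varphi$ and the lengths of $b_\alpha$ are constant on a set $X$ of size $|T|^+$. Restricting to $X$ preserves the pattern, since each new $C_\alpha$ is a subset of the old one and equivalence is monotone.
\item The pattern is expressed by a partial type $\Sigma_\delta$ in variables $\bar x,(y_\alpha,y'_\alpha)_{\alpha<\delta}$:
\[
y_\alpha\equiv_{y_{<\alpha}y'_{<\alpha}\,x_0\cdots\widehat{x_i}\cdots x_{k-1}}y'_\alpha\ (i<k),\qquad \varphi(\bar x,y_\alpha)\wedge\neg\varphi(\bar x,y'_\alpha).
\]
Every finite fragment of $\Sigma_\delta$ mentions only finitely many indices, so it is realized by $\bar a$ together with an order-preserving choice of indices from $X$. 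By compactness $\Sigma_\delta$ is consistent for every $\delta$.
\item Take $\delta=\mu$ regular and larger than the bound $\lambda$ given by bounded $k$-splitting. Realize $\Sigma_\mu$ by $\bar a^*,(b^*_\alpha,b'^*_\alpha)_{\alpha<\mu}$ and let $A^*=\bigcup_\alpha b^*_\alpha b'^*_\alpha$.
\item Any $C\subseteq A^*$ with $|C|<\lambda$ is contained in some $C^*_\alpha$ by regularity of $\mu$. The type $\tp(\bar a^*/A^*)$ $k$-splits over $C^*_\alpha$, hence over $C$ by the normalization. This contradicts $\lambda$-bounded $k$-splitting.
\end{enumerate}

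\textbf{Blocks of length $\gamma$.} For each finite sub-partition $\bar a'$, consisting of finite subtuples of the blocks, choose $C_{\bar a'}\subseteq A$ of size $\le|T|$ over which $\tp(\bar a'/A)$ does not $k$-split. There are at most $|\gamma|+\aleph_0$ such sub-partitions, so $C=\bigcup C_{\bar a'}$ has size $\le|T|+|\gamma|$. Suppose $\tp(\bar a/A)$ $k$-split over $C$ via a parameter-free $\varphi$. Then $\varphi$ uses only finitely many variables, say those of some $\bar a'$. Equivalence over $C\cup(\text{full blocks but one})$ implies equivalence over $C\cup(\bar a'\text{ blocks but one})$. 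Hence $\tp(\bar a'/A)$ $k$-splits over $C$, and therefore over $C_{\bar a'}\subseteq C$ by monotonicity, which is a contradiction.

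The main obstacle is the stretching step (3)–(5). There one must check that the equivalence conditions over the growing sets $y_{<\alpha}y'_{<\alpha}$ are genuinely type-definable, with infinitely many formulas but each involving finitely many variables. One must also check that passing to the subsequence $X$ keeps every condition true, which is where monotonicity is used.
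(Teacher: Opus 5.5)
The paper does not prove this fact; it imports it from Hanson, so there is no in-paper argument to compare your route against. Your proof is correct and self-contained, and it is the standard Shelah-style stretching argument (the one giving $\kappa(T)\leq|T|^+$ for stable theories) adapted to $k$-splitting.

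The three steps hold up as follows.

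\begin{itemize}
\item Your normalization absorbs the parameters $\bar c\subseteq C$ into the witnesses, replacing $b,b'$ by $b\bar c,b'\bar c$. This is exactly what makes $k$-splitting downward monotone in the base: the equivalences over $C'a_0\cdots\widehat{a_i}\cdots a_{k-1}$ survive, and the non-equivalence is witnessed over $\bar a$ alone.
\item In the finite-block case, the length-$|T|^+$ chain plus pigeonhole on the parameter-free $\varphi$ yields an infinite homogeneous pattern. Compactness then stretches it to length $\mu>\lambda$ with $\mu$ regular, and regularity puts every $C\subseteq A^*$ with $|C|<\lambda$ inside some $C^*_\alpha$.
\item For $\gamma$-blocks, any failure of $b\equiv_{C\bar a}b'$ is witnessed by a formula using finitely many coordinates of $\bar a$. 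So the passage to finite sub-partitions costs only a factor $|\gamma|+\aleph_0$.
\end{itemize}

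One simplification in step (3): when you check a finite fragment of $\Sigma_\delta$, compare it against the original sets $C_\beta$, which already contain $b_{\beta'}b'_{\beta'}$ for all $\beta'<\beta$. Then no separate argument about restricting to $X$ is needed. Also, only the infiniteness of $X$ is used afterwards, not $|X|=|T|^+$.
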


The following lemma is an analogue of a classical result of Shelah \cite{ShelahStabletheories} bounding the number
of complete types extending a given type in a stable theory, to the
context of bounded $k$-splitting. 

\begin{lemma}\label{lem:count}
Suppose $T$ has bounded $k$-splitting. Fix a parameter set $A$ and
complete types
$$
p_i(x_0,\ldots,\widehat{x_i},\ldots,x_{k-1})\qquad(i<k)
$$
over $A$, where the variable blocks are finite.
The number of complete types in $x_0,\ldots,x_{k-1}$ over $A$ extending
$\bigcup_{i<k}p_i$ is at most
$$
(|A|+{|T|})^{{|T|}}\cdot 2^{2^{{|T|}}}.
$$
This number is at most $\lambda$ whenever
$|A|\leq\lambda$, $\lambda\geq2^{2^{{|T|}}}$, and
$\lambda^{{|T|}}=\lambda$.
\end{lemma}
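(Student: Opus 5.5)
The plan is to bound the number of completions by an injective coding. Each completion $q$ will be assigned a small base $C\subseteq A$ together with a Boolean ``decision function'' of size $2^{2^{|T|}}$. The base $C$ comes from Fact~\ref{fact:smallbase}, and the decision function is made well defined by the definition of non-$k$-splitting.

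Fix a complete type $q(x_0,\ldots,x_{k-1})$ over $A$ extending $\bigcup_{i<k}p_i$, and realize it by $a_0,\ldots,a_{k-1}$. Consider the $k$-partitioned type $\tp(a_0;\ldots;a_{k-1}/A)$; its blocks are finite. By Fact~\ref{fact:smallbase} there is $C_q\subseteq A$ with $|C_q|\leq|T|$ over which it does not $k$-split. For a finite tuple $b$ from $A$ and $i<k$, let
$$r_i(b)=\tp(b\,a_0\cdots\widehat{a_i}\cdots a_{k-1}/C_q),$$
a type over a set of size $\leq|T|$ in finitely many variables. The key observation is that $r_i(b)$ depends only on $p_i$ and $C_q$, not on $q$: since $b$ and $C_q$ lie in $A$ and $p_i=\tp(a_0\cdots\widehat{a_i}\cdots a_{k-1}/A)$, the type $r_i(b)$ is read off from $p_i$ by substituting the parameters.

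Non-$k$-splitting over $C_q$ says the following. Suppose $b,b'$ are tuples from $A$ of the same length with $r_i(b)=r_i(b')$ for all $i<k$. Then $b\equiv_{C_qa_0\cdots\widehat{a_i}\cdots a_{k-1}}b'$ for each $i$, and hence $b\equiv_{C_qa_0\cdots a_{k-1}}b'$. Consequently the function
$$F_q\colon\bigl(\varphi(x_0,\ldots,x_{k-1},y,z),\,c,\,(r_i)_{i<k}\bigr)\longmapsto\text{truth value of }\varphi(a,b,c)\text{ for any }b\text{ with }r_i(b)=r_i,$$
with $c$ ranging over finite tuples from $C_q$, is well defined. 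We may set it to $0$ on tuples $(r_i)_{i<k}$ that are not realized. Actually it is cleaner to absorb $c$ into the tuple $b$, so that $F_q$ only takes formulas and $k$-tuples of types over $C_q$ as input.

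Now $q$ is recovered from the pair $(C_q,F_q)$. Indeed, $\varphi(x,b)\in q$ if and only if $F_q(\varphi,(r_i(b))_{i<k})=1$, and the types $r_i(b)$ are computable from $C_q$ and the fixed $p_i$. So $q\mapsto(C_q,F_q)$ is injective, and it remains to count:
\begin{itemize}
\item The number of choices of $C_q$ is at most $(|A|+|T|)^{|T|}$.
\item For a fixed $C$, the domain of $F$ consists of a formula together with a $k$-tuple of complete types over $C$ in finitely many variables. There are at most $|T|\cdot(2^{|T|})^k=2^{|T|}$ such inputs. The number of functions $F$ is therefore at most $2^{2^{|T|}}$.
\end{itemize}
Multiplying gives the bound $(|A|+|T|)^{|T|}\cdot2^{2^{|T|}}$. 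The final clause is cardinal arithmetic: $(|A|+|T|)^{|T|}\leq\lambda^{|T|}=\lambda$ (using $|T|\leq 2^{2^{|T|}}\leq\lambda$), and the product of two cardinals $\leq\lambda$ is $\leq\lambda$.

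I expect the main subtle point to be the claim that $r_i(b)$ is determined by $p_i$ and $C_q$ independently of the realization. This is what lets $F_q$ live on a domain depending only on $C_q$, rather than on data involving the $a_j$. A related point is checking that $k$-splitting, as defined with $b,b'$ from $A$, applies after the parameters from $C_q$ are folded into $b$. Both steps should be routine once written carefully, since $C_q\subseteq A$ and $p_i$ is complete over $A$.
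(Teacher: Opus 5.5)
Your proposal is correct and is essentially the paper's argument. Both use Fact~\ref{fact:smallbase} to get a base $C\subseteq A$ of size at most $|T|$. Both then use non-$k$-splitting to see that whether $\varphi(x,\bar b)\in q$ depends only on the class of $\bar b$ under the relation $\bar b\equiv_{Ca_0\cdots\widehat{a_i}\cdots a_{k-1}}\bar b'$ $(i<k)$, which is determined by the $p_i$ alone. The paper records this data as $q\res D_C$, where $D_C$ is a set of at most $2^{|T|}$ class representatives. You instead record it as a $0/1$ function on the at most $2^{|T|}$ labels $(r_i(b))_{i<k}$. This gives the same count and makes explicit the independence from $q$ that the paper uses tacitly.
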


\begin{proof}
Let $Q$ be the set of extensions, and assume $Q\ne\varnothing$.
By Fact~\ref{fact:smallbase}, every $q\in Q$ does not $k$-split over some
$C\subseteq A$ of size at most ${|T|}$. There are at most
$(|A|+{|T|})^{{|T|}}$ possible choices for $C$.

Fix $C$ and a realization $a_0,\ldots,a_{k-1}$ of a member of $Q$.
For each $m<\omega$, define an equivalence relation $\sim_C$ on
$A^m$ by setting
$$
\bar b\sim_C\bar b'
\quad\Longleftrightarrow\quad
\bar b\equiv_{C a_0\cdots\widehat{a_i}\cdots a_{k-1}}\bar b'
\quad(i<k).
$$

There are at most $2^{|T|}$ classes for each $m$.
For each $m$, choose one tuple from each equivalence class in $A^m$,
and let $D_C$ consist of $C$ and all coordinates of the chosen tuples.
Then $|D_C|\leq2^{{|T|}}$.

If $q\in Q$ does not $k$-split over $C$, and $\bar b'$ represents
$\bar b$, non-$k$-splitting gives
$$
\varphi(x_0,\ldots,x_{k-1};\bar b)\in q
\quad\Longleftrightarrow\quad
\varphi(x_0,\ldots,x_{k-1};\bar b')\in q.
$$
Thus $q$ is determined by $q\res D_C$. There are at most
$2^{{|T|}+|D_C|}\leq2^{2^{{|T|}}}$ such restrictions, and so summing over $C$
proves the bound.
\end{proof}

As mentioned in the preliminary section, the notion of $r$-end-homogeneous sequences is central to indiscernible extraction in both stable theories and theories with bounded $k$-splitting. The following lemma extracts $r$-end-homogeneous sequences in theories with bounded $k$-splitting.

\begin{lemma}\label{lem:ramification}
Let $1\leq r<\omega$ and let
$\lambda\geq{|T|}+|B|+|\gamma|$ be infinite.
Every sequence of $\gamma$-tuples of length $\beth_r(\lambda)^+$
has an $r$-end-homogeneous subsequence over $B$ of order type
$\lambda^++r$.
\end{lemma}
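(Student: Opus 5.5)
The plan is to imitate the Erdős–Rado proof of $\beth_r(\lambda)^+\to(\lambda^+)^{r+1}_\lambda$ by induction on $r$, with types over $B$ playing the role of colours. Bounded $k$-splitting is not needed here: Lemma~\ref{lem:ramification} is a pure counting statement, and only the hypothesis $\lambda\geq|T|+|B|+|\gamma|$ is used. The engine is a one-step \emph{coherence} lemma.

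\textbf{Coherence lemma.} Let $\mu\geq\lambda$ with $\mu^\lambda=\mu$ (take $\mu=\beth_r(\lambda)$, so $\mu^{\lambda}=\mu$ holds because $r\geq 1$). Let $(b_i)_{i<(2^\mu)^+}$ be a sequence of $\gamma$-tuples. Then there exist:
\begin{itemize}
\item an increasing subsequence $(a_\alpha)_{\alpha<\mu^+}$, and
\item a further tuple $c=b_{i^*}$ whose index lies above all indices of the $a_\alpha$,
\end{itemize}
such that $\tp(a_\alpha/Ba_{<\alpha})=\tp(c/Ba_{<\alpha})$ for every $\alpha<\mu^+$.

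To prove it, I would first build an index set $S\subseteq(2^\mu)^+$ with $|S|\leq 2^\mu$. It should be closed under the following operation: for every $X\subseteq S$ with $|X|\leq\mu$, every $j\in S$, and every type over $B\cup\{b_i:i\in X\}$ realized by some $b_i$ with $i>\sup X$, some realization $b_{i'}$ with $i'\in S$ and $i'>\max(j,\sup X)$ is included. The count is:
\begin{itemize}
\item there are $(2^\mu)^\mu=2^\mu$ choices of $X$;
\item there are $2^{\mu+|T|+|B|+|\gamma|}=2^\mu$ types in $\gamma$ variables over such a set.
\end{itemize}
So a closure of length $\mu^+$ keeps $|S|\leq2^\mu$. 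Since $(2^\mu)^+$ is regular, $S$ is bounded, and I pick $i^*>\sup S$. I then choose $a_\alpha=b_{i_\alpha}$ with $i_\alpha\in S$ increasing by recursion on $\alpha<\mu^+$. At stage $\alpha$ the set $X=\{i_\beta:\beta<\alpha\}$ has size $\leq\mu$, and $c$ realizes some type over $Ba_{<\alpha}$ with index above $\sup X$. Closure then yields $i_\alpha\in S$ above all previous indices with $\tp(a_\alpha/Ba_{<\alpha})=\tp(c/Ba_{<\alpha})$. The main technical point is arranging that realizations can always be found above any given index; this is why the closure condition includes the parameter $j$.

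\textbf{Base case $r=1$.} Apply the coherence lemma with $\mu=\lambda$; here $\lambda^\lambda=2^\lambda$ rather than $\lambda$, but the count still gives $|S|\leq2^{\lambda}$. The resulting sequence $(a_\alpha)_{\alpha<\lambda^+}\concat c$ has order type $\lambda^++1$. It is end-homogeneous over $B$: for $\beta<\alpha$ we have
\[
a_\alpha\equiv_{Ba_{<\beta}}c\equiv_{Ba_{<\beta}}a_\beta,
\]
and comparisons with $c$ itself are immediate from the coherence property.

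\textbf{Inductive step $r\to r+1$.} Start from length $\beth_{r+1}(\lambda)^+=(2^{\mu})^+$ with $\mu=\beth_r(\lambda)$, and apply the coherence lemma to get $(a_\alpha)_{\alpha<\mu^+}$ and $c$. Since $|B\cup c|\leq\lambda$, the induction hypothesis applies over the base $Bc$ to the sequence $(a_\alpha)_{\alpha<\beth_r(\lambda)^+}$. This yields a subsequence $(a'_\xi)_{\xi<\lambda^++r}$ that is $r$-end-homogeneous over $Bc$, and it is still coherent with $c$ because it is a subsequence. Appending $c$ gives order type $\lambda^++r+1$.

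To verify $(r+1)$-end-homogeneity over $B$, fix $\alpha$ and $I,J\in[\eta\setminus\alpha]^{r+1}$, and write $I=I'\cup\{m\}$ with $m=\max I$ (similarly $J=J'\cup\{m_J\}$).
\begin{itemize}
\item If $m$ is not the index of $c$, coherence gives $a_m\equiv_{Ba_{<m}}c$, hence $a_I\equiv_{Ba_{<\alpha}}a_{I'}c$. If $m$ is the index of $c$, this holds trivially.
\item Doing the same for $J$ reduces the claim to $a_{I'}c\equiv_{Ba_{<\alpha}}a_{J'}c$.
\item This last equivalence follows from $r$-end-homogeneity over $Bc$, since $I',J'$ are $r$-subsets above $\alpha$ inside the earlier part of the sequence.
\end{itemize}

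I expect the delicate part to be the bookkeeping in the coherence lemma, namely getting the indices increasing together with the cardinal arithmetic $(2^\mu)^\mu=2^\mu$. The homogeneity verification itself is routine.
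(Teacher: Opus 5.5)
Your proof is correct once one slip is repaired. Its inductive step is the paper's; what differs is how you prove the one-step coherence lemma.

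\textbf{The slip.} As written, the closure condition with the extra parameter $j$ cannot be met in general. It requires every type over $Bb_X$ realized above $\sup X$ to have a realization $b_{i'}$, $i'\in S$, above every $j\in S$, i.e.\ cofinally in $S$. This already fails for $X=\varnothing$ and a type over $B$ realized by a single $b_i$: take $j$ to be that witness. But your recursion never uses $j$. The target $c=b_{i^*}$ has $i^*>\sup S\geq\sup X$, so the $j$-free condition (some witness $i'\in S$ with $i'>\sup X$) is all you need, and your counting shows it can be met with $|S|\leq 2^\mu$. Drop $j$, and drop the hypothesis $\mu^\lambda=\mu$ as well. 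That hypothesis is idle, since $(2^\mu)^\mu=2^\mu$ for every $\mu$, as your own base case shows.

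\textbf{Comparison with the paper.} For $r>1$ the paper proceeds as you do. It applies the $r=1$ case at $\mu=\beth_{r-1}(\lambda)$ to get $(b_i)_{i<\mu^+}\concat(d)$ end-homogeneous over $B$; this is exactly your coherence property $b_i\equiv_{Bb_{<i}}d$. It then applies induction over $Bd$ and appends $d$.

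For $r=1$, however, the paper uses Shelah's partition tree instead of a closure argument. Classes $X\in\mathcal P_\alpha$ are successively split by type over $D_Xa_{\min X}$, and limit levels take intersections of chains. A count shows that only $2^\lambda$ indices are consumed before stage $\lambda^+$, so level $\lambda^+$ is nonempty. The minima along a branch then give the sequence together with its last element.

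Your Erd\H{o}s--Rado closure argument works differently: a set $S$ closed under subsets of size at most $\mu$, a point $i^*$ above it, and a greedy recursion realizing $\tp(c/Ba_{<\alpha})$ inside $S$. This avoids the limit-stage bookkeeping (intersecting chains and counting them) and isolates exactly the property the induction consumes.

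What the tree buys is reusability. In Theorem~\ref{thm:weak} the same tree, built on a weakly compact $\kappa$, has levels of size $<\kappa$, and the tree property supplies a branch of length $\kappa$. Your argument depends on finding a single point above a closed set of size $<\kappa$, so it does not directly yield extraction of length $\kappa$.
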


\begin{proof}
For $r=1$, put $\kappa=(2^\lambda)^+$ and write the given sequence as
$(a_i)_{i<\kappa}$. Recursively define families $\mathcal P_\alpha$ of
nonempty subsets of $\kappa$, for $\alpha\leq\lambda^+$.
Let $\mathcal P_0$ partition $\kappa$ by
$i\sim j$ if and only if $a_i\equiv_Ba_j$.
For $X\in\mathcal P_\alpha$, let $D_X$ consist
of $B$ and the coordinates of $a_{\min(X_\xi)}$ for $\xi<\alpha$,
where $X_\xi\in\mathcal P_\xi$ is the unique set containing $X$.
For $\alpha<\lambda^+$, form $\mathcal P_{\alpha+1}$ by partitioning
$X\setminus\{\min(X)\}$, for each $X\in\mathcal P_\alpha$, under
$i\sim_Xj\Longleftrightarrow a_i\equiv_{D_Xa_{\min(X)}}a_j$.
At a nonzero limit $\alpha$, take all nonempty intersections
$\bigcap_{\xi<\alpha}X_\xi$ of decreasing chains with
$X_\xi\in\mathcal P_\xi$.

Inductively, $\mathcal P_\alpha$ partitions the indices not selected as
an earlier $\min(X)$, and $a_i\equiv_{D_X}a_j$ for $i,j\in X$.
For $\alpha<\lambda^+$, $|\mathcal P_\alpha|\leq2^\lambda$: at stage
zero and at successors the relevant parameter sets and the variable
blocks have size at most $\lambda$, and at limits there are at most
$(2^\lambda)^{|\alpha|}=2^\lambda$ chains. Thus at most
$\lambda^+\cdot2^\lambda=2^\lambda$ indices are selected before stage
$\lambda^+$, so $\mathcal P_{\lambda^+}\ne\varnothing$.
Choose $X_{\lambda^+}\in\mathcal P_{\lambda^+}$ and its containing sets
$X_\alpha\in\mathcal P_\alpha$ for $\alpha<\lambda^+$.
Their minima $i_\alpha=\min(X_\alpha)$, $\alpha\leq\lambda^+$, strictly
increase. Since $i_\xi,i_\zeta\in X_\alpha$ for
$\alpha\leq\xi,\zeta\leq\lambda^+$, equality of types over $D_{X_\alpha}$
shows that $(a_{i_\alpha})_{\alpha\leq\lambda^+}$ is end-homogeneous over
$B$.

Now suppose $r > 1$. Put $\mu = \beth_{r-1}(\lambda)$, and put
$\rho = \lambda^+ + (r-1)$. The case $r=1$ gives an end-homogeneous
subsequence $(b_i)_{i<\mu^+} ^\frown (d)$ over $B$.
Since ${|T|} + |Bd| \leq \lambda$, induction over $Bd$ gives a
$(r-1)$-end-homogeneous subsequence $(e_\xi)_{\xi < \rho}$ of
$(b_i)_{i < \mu^+}$. Then the sequence $(e_\xi)_{\xi \leq \rho}$
(setting $e_\rho = d$) is $r$-end-homogeneous over $B$ and has
order type $\lambda^+ + r$.
\end{proof}

\begin{lemma}\label{lem:tail}
Suppose $T$ has bounded $k$-splitting and
$\lambda\geq{|T|}+|B|+|\gamma|$ is infinite.
If $\cf(\theta)>\lambda$, every $k$-end-homogeneous sequence of
$\gamma$-tuples of length $\theta$ over $B$ has a
$B$-indiscernible final segment.
\end{lemma}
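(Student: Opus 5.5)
The plan is to realize a ``limit'' $k$-tuple at the end of the sequence, use Fact~\ref{fact:smallbase} to bound where its type can $k$-split, and then prove indiscernibility of the tail by induction on tuple length.

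\emph{Setup.} Let $(a_i)_{i<\theta}$ be $k$-end-homogeneous over $B$. Note first that $r$-end-homogeneity for every $r\le k$ follows by restricting to subtuples, since $\theta$ is infinite. For $\alpha<\theta$ let
$$
p_\alpha(x_0,\ldots,x_{k-1})=\tp(a_I/Ba_{<\alpha})\quad\text{for any }I\in[\theta\setminus\alpha]^k .
$$
This is well defined by (\ref{eq:endhom}), and the $p_\alpha$ increase with $\alpha$. Hence $p=\bigcup_{\alpha<\theta}p_\alpha$ is a consistent complete type over $Ba_{<\theta}$, and I realize it by $c=(c_0,\ldots,c_{k-1})$. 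The same reasoning gives the following: for any finite $J\subseteq\theta$ and any $I\in[\theta\setminus(\max J+1)]^{k}$, we have $a_Ja_I\equiv_B a_Jc$. The analogous statement holds for subtuples $c_{\neq i}$ and $(k-1)$-sets $I$.

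\emph{Choosing the base.} View $\tp(c_0;\ldots;c_{k-1}/Ba_{<\theta})$ as a $k$-partitioned type whose blocks have length $\gamma$. By Fact~\ref{fact:smallbase} it does not $k$-split over some $C\subseteq Ba_{<\theta}$ with $|C|\le|T|+|\gamma|\le\lambda$. Since $\cf(\theta)>\lambda$, there is $\beta<\theta$ with $C\subseteq Ba_{<\beta}$. The claim is that $(a_i)_{\beta\le i<\theta}$ is indiscernible over $B$.

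\emph{Induction.} I will prove the stronger statement that for every $n$, all $n$-element $J,J'\subseteq[\beta,\theta)$ satisfy $a_J\equiv_{Ba_{<\beta}}a_{J'}$.

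For $n\le k$ this is $n$-end-homogeneity at $\alpha=\beta$.

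For $n>k$, split $J=J_0\cup J_1$, where $J_1$ consists of the last $k$ indices, and split $J'=J'_0\cup J'_1$ the same way. By the observation in the setup, $a_J\equiv_{Ba_{<\beta}}a_{J_0}c$ and $a_{J'}\equiv_{Ba_{<\beta}}a_{J'_0}c$. It therefore suffices to show
$$
a_{J_0}\equiv_{Ba_{<\beta}c}a_{J'_0}
$$
for finitely many parameters from $Ba_{<\beta}$ at a time. Since $a_{J_0}$ and $a_{J'_0}$ (together with any finite tuple from $Ba_{<\beta}$) are finite tuples from $Ba_{<\theta}$, non-$k$-splitting of $\tp(c_0;\ldots;c_{k-1}/Ba_{<\theta})$ over $C$ reduces this to checking
$$
a_{J_0}\equiv_{C\,c_0\cdots\widehat{c_i}\cdots c_{k-1}}a_{J'_0}\qquad(i<k).
$$
It is enough to verify this over $Ba_{<\beta}\supseteq C$. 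To do so, pick one $(k-1)$-set $I$ of indices above both $\max J_0$ and $\max J'_0$. Then
$$
a_{J_0}c_{\neq i}\equiv_{Ba_{<\beta}}a_{J_0}a_I
\quad\text{and}\quad
a_{J'_0}c_{\neq i}\equiv_{Ba_{<\beta}}a_{J'_0}a_I .
$$
Now $J_0\cup I$ and $J'_0\cup I$ are $(n-1)$-subsets of $[\beta,\theta)$ with $I$ in the last positions, so the induction hypothesis gives $a_{J_0}a_I\equiv_{Ba_{<\beta}}a_{J'_0}a_I$. This completes the inductive step.

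\emph{Main obstacle.} The delicate step is the bookkeeping in the reduction. One must check that the non-$k$-splitting hypothesis, which is stated for finite tuples $b,b'$ from the parameter domain, applies to $a_{J_0}$ and $a_{J'_0}$ padded with parameters from $Ba_{<\beta}$. One must also check that replacing $c_{\neq i}$ by a genuine later $(k-1)$-tuple $a_I$ is legitimate, which requires a single $I$ lying above both $J_0$ and $J'_0$. I would handle the first point by including an arbitrary finite tuple $\bar e$ from $Ba_{<\beta}$ in both $b=a_{J_0}\bar e$ and $b'=a_{J'_0}\bar e$ before invoking non-$k$-splitting.
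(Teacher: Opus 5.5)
Your proof is correct and follows the paper's argument: induction on tuple length, using non-$k$-splitting of the limit type $p=\bigcup_\alpha p_\alpha$ over a small $C\subseteq Ba_{<\beta}$, with the last $k$ entries as the blocks and the earlier entries as parameters; the only difference is cosmetic, in that you work with an ideal realization $c$ of $p$, whereas the paper uses the actual last $k$ entries $v$ (which realize $p\res A_{\min v}$) and compares against a fixed initial tuple $u_0$. One small correction: when $\gamma$ is infinite, $a_{J_0}$ is not a finite tuple, so you should apply non-$k$-splitting to corresponding finite subtuples of $a_{J_0}\bar e$ and $a_{J'_0}\bar e$, as the paper remarks.
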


\begin{proof}
Let $(a_i)_{i<\theta}$ be the sequence and put $A_\alpha=Ba_{<\alpha}$.
For each $\alpha<\theta$, define
$$
 p_\alpha(x_0;\ldots;x_{k-1})=\tp(a_I/A_\alpha),
 \qquad I\in[\theta\setminus\alpha]^k.
$$
End-homogeneity makes $p_\alpha$ independent of the choice of $I$
and gives $p_\alpha\subseteq p_\beta$ for $\alpha<\beta$.
Thus $p=\bigcup_{\alpha<\theta}p_\alpha$ is a complete consistent
partitioned type over $Ba_{<\theta}$.
By Fact~\ref{fact:smallbase}, choose $C\subseteq Ba_{<\theta}$ of size
at most ${|T|}+|\gamma|\leq\lambda$ over which $p$ does not
$k$-split. Choose $\beta<\theta$ with $C\subseteq A_\beta$, using
$\cf(\theta)>\lambda$. For every $I\in[\theta\setminus\beta]^k$,
$$
 \tp(a_I/A_{\min I})=p\res A_{\min I}
$$
does not $k$-split over $A_\beta$, which is also true of the realization.

Enumerate the final segment $(a_i)_{\beta\leq i<\theta}$ in order as
$(e_i)_{i<\eta}$, and put $D=A_\beta$. It is
$k$-end-homogeneous over $D$, hence $k$-indiscernible over $D$.
Suppose it is $m$-indiscernible, where $m\geq k$. Write any
increasing $(m+1)$-tuple of its blocks as $uv$, with
$v=(v_0,\ldots,v_{k-1})$ its last $k$ blocks, and put
$u_0=e_0\cdots e_{m-k}$. For each $j<k$, $m$-indiscernibility gives
$$
 u\equiv_{Dv_0\cdots\widehat v_j\cdots v_{k-1}}u_0.
$$
Non-$k$-splitting of the type
of $v$ over its preceding segment gives $uv\equiv_Du_0v$, and
$k$-end-homogeneity gives $u_0v\equiv_De_0\cdots e_m$.
For infinite blocks, each formula uses only finite coordinate subtuples
of $u,u_0$. This proves $(m+1)$-indiscernibility. The final segment is
therefore $D$-indiscernible, hence $B$-indiscernible.
\end{proof}

Theorem \ref{thm:basic} now follows.

\begin{proof}[Proof of Theorem \ref{thm:basic}]
Apply Lemma~\ref{lem:ramification} with $r=k$ and take the first
$\lambda^+$ tuples of the resulting subsequence. This is a
$k$-end-homogeneous sequence of length $\lambda^+$.
Lemma~\ref{lem:tail} applies since $\cf(\lambda^+)=\lambda^+>\lambda$.
A final segment of $\lambda^+$ has order type $\lambda^+$.
\end{proof}

We now know that given a sequence of (possibly infinite) tuples of length $\beth_{k}(\lambda)^+$, we may extract an indiscernible subsequence of length $\lambda^+$. Our next goal is to show that we may improve this result assuming that all of the tuples are finite (though with a slightly stronger assumption on the cardinal $\lambda$). 

\begin{theorem}\label{thm:main}
Suppose $T$ has bounded $k$-splitting, $\lambda^{|T|}=\lambda$, $|B|\leq\lambda$, and $n\geq 1$ is finite. Then every sequence of $n$-tuples of length $\beth_{k-1}(\lambda)^+$ has a $B$-indiscernible subsequence of length $\lambda^+ + (k-1).$
\end{theorem}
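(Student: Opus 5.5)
Compared with Theorem~\ref{thm:basic}, the plan saves one exponential by using Lemma~\ref{lem:count} in the last step of the tree construction. The first $k-1$ levels are built exactly as in Lemma~\ref{lem:ramification}. Since the tuples are finite and $\lambda^{|T|}=\lambda$, types over sets of size at most $\lambda$ number at most $\lambda$ (in fact $\lambda^{|T|}$), not $2^\lambda$. Throughout I may assume $\lambda\geq 2^{2^{|T|}}$ when appealing to Lemma~\ref{lem:count}. Note that $\lambda^{|T|}=\lambda$ already gives $\lambda\geq 2^{|T|}$. If $2^{2^{|T|}}>\lambda$, I would instead use the cruder bound $(|A|+|T|)^{|T|}\cdot 2^{2^{|T|}}$ directly, or enlarge the counting slightly. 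I expect to settle this by checking what is really needed. Only $\leq 2^\lambda$ classes are needed at each node for a tree of height $\lambda^+$ to survive in $(2^\lambda)^+$.

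\textbf{Step 1.} I aim for a $(k-1)$-end-homogeneous sequence over $B$ of order type $\lambda^++(k-1)$ that also has an extra property. For every $\alpha$ and every $k$-set $I$ above $\alpha$, $\tp(a_I/Ba_{<\alpha})$ should be determined by its $(k-1)$-subtypes, up to boundedly many possibilities controlled by Lemma~\ref{lem:count}. Concretely, I run the $r=1$ construction of Lemma~\ref{lem:ramification} $k-1$ times, nesting as in its inductive step, starting from $\beth_{k-1}(\lambda)^+$. This yields $(e_\xi)_{\xi<\lambda^++(k-1)}$, which is $(k-1)$-end-homogeneous over $B$.

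\textbf{Step 2.} I upgrade to $k$-end-homogeneity on a large subset without paying another exponential. Fix $\alpha$ and a $k$-set $I$ above $\alpha$. By $(k-1)$-end-homogeneity, the restrictions of $\tp(a_I/Ba_{<\alpha})$ to each $(k-1)$-subtuple of blocks are the same fixed types $p_i$. Lemma~\ref{lem:count} then bounds the number of possible complete $k$-types over $Ba_{<\alpha}$ by $\lambda$, since $|Ba_{<\alpha}|\leq\lambda$. So in the final (innermost) stage of the construction I colour $k$-sets by at most $\lambda$ colours per initial segment, rather than $2^\lambda$. My plan is to rearrange Step 1 so that this innermost stage is the one handling $k$-sets: an Erd\H{o}s--Rado style tree argument in which each node splits into at most $\lambda$ pieces. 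A tree of height $\lambda^+$ with $\lambda$-splitting has $\lambda^{\lambda}=2^\lambda$ branches, so this costs only one $\beth$ level over $\lambda$. The earlier levels each cost one exponential, which gives $\beth_{k-1}(\lambda)^+$ in total. The output is a $k$-end-homogeneous sequence over $B$ of order type $\lambda^++(k-1)$.

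\textbf{Step 3.} I apply the argument of Lemma~\ref{lem:tail}. On the first $\lambda^+$ terms, together with the $k-1$ terms on top, Fact~\ref{fact:smallbase} and $\cf(\lambda^+)>\lambda$ give some $\beta$ beyond which non-$k$-splitting over $A_\beta$ holds. The induction on $m$ from Lemma~\ref{lem:tail} then gives indiscernibility of the tail from $\beta$ together with the last $k-1$ points. That tail has order type $\lambda^++(k-1)$.

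The main obstacle is Step 2. The difficulty is making the counting bound of Lemma~\ref{lem:count} fit a tree argument that gains a full exponential, and ensuring the $(k-1)$ top points still satisfy the needed end-homogeneity. I would first try doing the $k$-type colouring in the innermost recursion, where the partition at node $X$ is by $\tp(a_j/D_X a_{\min X})$ refined by the $k$-types. Since the lower arity types are already fixed, the number of classes stays at most $\lambda$ by Lemma~\ref{lem:count}.
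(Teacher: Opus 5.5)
Your Step~2 is where the plan breaks: shrinking the number of classes per node from $2^\lambda$ to $\lambda$ saves nothing in a partition-tree argument. In Lemma~\ref{lem:ramification}, $\mathcal P_{\lambda^+}$ is nonempty because fewer than $\kappa$ indices are selected before stage $\lambda^+$. With $\lambda$-splitting, the only available bound on $|\mathcal P_\alpha|$ for $\lambda\leq\alpha<\lambda^+$ is still $\lambda^{|\alpha|}=2^\lambda$, the same as with $2^\lambda$-splitting. Your own remark that $2^\lambda$ classes already suffice in $(2^\lambda)^+$ is this same observation. So any tree stage producing a branch of length $\mu^+$ needs more than $2^\mu$ points for the counting to work, and it yields only $1$-end-homogeneity over its current parameters.

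If you do Step~1 as stated, the budget $\beth_{k-1}(\lambda)^+$ is already spent, and Step~2 has only $\lambda^+$ points, where the counting gives nothing. If you instead fold the $k$-types into the innermost stage, that stage still only gives $1$-end-homogeneity over the outer points. You then need $k$ nested stages and arrive at $\beth_k(\lambda)^+$, which is just Theorem~\ref{thm:basic}. In Erd\H{o}s--Rado the saved exponential comes from the last step being a pigeonhole on exactly $\lambda^+$ points, not a tree. Your plan has no such step, and a naive pigeonhole is unavailable because the parameter sets $Ba_{<\alpha}$ grow with $\alpha$. Separately, your opening claim that types over sets of size $\leq\lambda$ number at most $\lambda^{|T|}$ is a stability phenomenon and fails here, e.g.\ in the random graph. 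Lemma~\ref{lem:count} only counts $k$-types whose $(k-1)$-subtypes are fixed.

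The paper supplies the missing pigeonhole in Lemma~\ref{lem:fodor}, and never builds a $k$-end-homogeneous sequence. After Lemma~\ref{lem:ramification} with $r=k-1$, it takes the first $\lambda^+$ terms and realizes their limit $(k-1)$-type by $c=c_1\cdots c_{k-1}$. Using $(k-1)$-end-homogeneity, it converts non-$k$-splitting of $\tp(a_\alpha;c_1;\ldots;c_{k-1}/A_\alpha)$ over some $C_\alpha$ with $|C_\alpha|\leq|T|$ into ordinary non-splitting of $\tp(a_\alpha/A_\alpha c)$ over $C_\alpha c$. Fodor's lemma on $\{\alpha<\lambda^+:\cf(\alpha)=|T|^+\}$, together with $\lambda^{|T|}=\lambda$, makes $C_\alpha$ constant on a stationary set. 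Only then is Lemma~\ref{lem:count} used, as a pigeonhole on the types of $a_\alpha c$ over a fixed $A_\delta$ containing representatives of all $\equiv_{Cc}$-classes. The surviving stationary subsequence is end-homogeneous and non-splitting over $A_\delta c$, hence indiscernible by Fact~\ref{lem:ordinary}. The $k-1$ top terms of the $(k-1)$-end-homogeneous sequence are appended at the end.

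In this route your $2^{2^{|T|}}$ concern is genuine, not cosmetic. The pigeonhole needs at most $\lambda$ types, and Lemma~\ref{lem:count} provides this only when $\lambda\geq2^{2^{|T|}}$. That does not follow from $\lambda^{|T|}=\lambda$ (take $\lambda=2^{|T|}$). As written, the paper's appeal to Lemma~\ref{lem:count} in this proof needs that extra assumption too, as in Corollary~\ref{cor:larger}.
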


This is an improvement of Theorem \ref{thm:basic} in two ways, first by removing a beth from the length of the original sequence, and second by extracting an indiscernible sequence with an extra finite tail. Most of the work is in the following lemma

\begin{lemma}\label{lem:fodor}
Suppose $T$ has bounded $k$-splitting,
$\lambda\geq{|T|}$,  $\lambda^{|T|}=\lambda$, and $|B|\leq\lambda$.
Assume that every family of types as in Lemma~\ref{lem:count} over any set of
size at most $\lambda$ has at most $\lambda$ common complete extensions.
Then every sequence of $n$-tuples of length $\lambda^+$ that is
$(k-1)$-end-homogeneous over $B$ has a
$B$-indiscernible subsequence of order type $\lambda^+$.
\end{lemma}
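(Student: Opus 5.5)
The plan is to reduce to the situation of Lemma~\ref{lem:tail}: find a stationary set of indices $S'\subseteq\lambda^+$ and some $\beta<\lambda^+$ such that the subsequence $(a_\alpha)_{\alpha\in S'}$ is $k$-end-homogeneous over $A_\beta=Ba_{<\beta}$ and its $k$-types do not $k$-split over $A_\beta$. Once that holds, the inductive $m\mapsto m+1$ argument in the second half of the proof of Lemma~\ref{lem:tail} shows the subsequence is $A_\beta$-indiscernible, hence $B$-indiscernible, and it has order type $\lambda^+$. The extra block comes from $(k-1)$-end-homogeneity combined with the counting hypothesis, with Fodor's lemma replacing the $\beth$-tree used in Lemma~\ref{lem:ramification}.

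\textbf{Counting step.} Put $A_\alpha=Ba_{<\alpha}$, so $|A_\alpha|\le\lambda$. By $(k-1)$-end-homogeneity, for every $I\in[\lambda^+\setminus\alpha]^k$ and every $j<k$, the type of $a_I$ with its $j$-th block deleted over $A_\alpha$ is the same type $p_\alpha$. So every $\tp(a_I/A_\alpha)$ is a common complete extension of the family $p_\alpha(x_0,\ldots,\widehat{x_j},\ldots,x_{k-1})$, $j<k$, as in Lemma~\ref{lem:count}. By hypothesis the set $Q_\alpha$ of such extensions has size at most $\lambda$. I would fix enumerations $Q_\alpha=\{q_{\alpha,\nu}:\nu<\lambda\}$ that are coherent under restriction as far as possible, since $p_\alpha\subseteq p_{\alpha'}$ for $\alpha<\alpha'$.

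\textbf{Pressing down.} Let $S=\{\alpha<\lambda^+:\cf(\alpha)>|T|\}$; this set is stationary. For each $\alpha\in S$ and each $q\in Q_\alpha$, Fact~\ref{fact:smallbase} gives $C_{\alpha,q}\subseteq A_\alpha$ of size $\le|T|$ over which $q$ does not $k$-split. Because $\cf(\alpha)>|T|$, each $C_{\alpha,q}$ lies in some $A_{f(\alpha,q)}$ with $f(\alpha,q)<\alpha$. Taking the supremum over the $\le\lambda$ types in $Q_\alpha$ need not stay below $\alpha$, so I would only press down on the $k$-types actually realized along the sequence from $\alpha$ onward. For example, consider $\tp(a_\alpha a_{J}/A_\alpha)$ for $J$ a $(k-1)$-set above $\alpha$. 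By $(k-1)$-end-homogeneity applied one index up, its restriction to any $k-1$ blocks is determined. Fodor's lemma then gives a stationary $S_1\subseteq S$ and a single $\beta$ with all the relevant $C$'s contained in $A_\beta$.

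Next, as in the proof of Lemma~\ref{lem:count}, each such type is determined by its restriction to a set $D_C\subseteq A_\beta$ of size $\le 2^{|T|}\le\lambda$, together with $C$ itself. There are at most $\lambda^{|T|}\cdot\lambda=\lambda$ such data over $A_\beta$. Partitioning $S_1$ into $\lambda$ pieces leaves a stationary $S'$ on which the data are constant. This constancy is what yields $k$-end-homogeneity of $(a_\alpha)_{\alpha\in S'}$ over $A_\beta$ together with the required non-$k$-splitting over $A_\beta$.

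\textbf{Main obstacle.} The delicate step is that $k$-end-homogeneity needs equality of types of arbitrary increasing $k$-tuples over $A_\beta a_{<\gamma}$ for $\gamma\in S'$, not just over $A_\beta$. Constancy of the restriction to $D_C$ only controls the type over $A_\beta$. To get past this, I would use non-$k$-splitting over $C\subseteq A_\beta$ as in Lemma~\ref{lem:count}: a parameter tuple from $a_{<\gamma}$ is $\sim_C$-equivalent to a representative in $D_C$, because its type over $C$ together with any $k-1$ of the blocks is fixed by $(k-1)$-end-homogeneity. This transfers equality over $A_\beta$ to equality over $A_\beta a_{<\gamma}$. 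If that transfer fails as stated, I would instead run the pressing-down inside a recursion of length $\lambda^+$, choosing the subsequence element by element and re-applying the counting bound over the enlarged base at each stage. The bound $\le\lambda$ on common extensions keeps the number of branches at $\lambda$, so a stationary, hence cofinal, set of choices survives.
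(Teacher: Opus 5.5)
Your skeleton is the paper's: Fodor on the small non-$k$-splitting bases, the counting hypothesis to freeze a type, and representatives to transfer type equality. Ending with the $m\mapsto m+1$ induction of Lemma~\ref{lem:tail} instead of Fact~\ref{lem:ordinary} would be a legitimate variant.

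\textbf{First gap: the pressing-down step.} This is the crux, and it is not justified as written. You rightly note that pressing down on all of $Q_\alpha$ fails, but your replacement has the same defect. If all you know about $\tp(a_\alpha a_J/A_\alpha)$ is that its $(k-1)$-block restrictions are determined, it can still range over up to $\lambda$ members of $Q_\alpha$ as $J$ varies. Then ``all the relevant $C$'s'' need not lie below $\alpha$.

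The missing observation is that $(k-1)$-end-homogeneity at level $\alpha+1$ gives $a_J\equiv_{A_\alpha a_\alpha}a_{J'}$ for all $J,J'\in[\lambda^+\setminus(\alpha+1)]^{k-1}$. So $r_\alpha=\tp(a_\alpha a_J/A_\alpha)$ is a single type, independent of $J$. This gives one $C_\alpha$ per index. It also gives $\tp(a_I/A_{\min I})=r_{\min I}$ for every $I\in[S']^k$, so freezing one type per index controls every $k$-type of the subsequence. This is exactly what the paper's adjoined tuple $c=c_1\cdots c_{k-1}$ encodes, since $r_\alpha=\tp(a_\alpha c/A_\alpha)$. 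The paper then converts non-$k$-splitting of $\tp(a_\alpha;c_1;\ldots;c_{k-1}/A_\alpha)$ into ordinary non-splitting of $\tp(a_\alpha/A_\alpha c)$ over $C_\alpha c$.

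\textbf{Second gap: the claim $D_C\subseteq A_\beta$.} The relevant relation is $\bar b\equiv_{Ca_K}\bar b'$ for $\bar b,\bar b'\in A_\xi$ and any $K\in[\lambda^+\setminus\xi]^{k-1}$. It is well defined by $(k-1)$-end-homogeneity, and it is $\equiv_{Cc}$ in the paper. Its classes may first be realized by tuples from $a_\xi$ with $\xi\geq\beta$. So the repair is:
\begin{itemize}
\item pick representatives from all of $Ba_{<\lambda^+}$;
\item choose $\delta$ with $D\subseteq A_\delta$ and discard indices $\leq\delta$;
\item freeze $r_\alpha\res A_\delta$ rather than a type over $A_\beta$. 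This type lies in $Q_\delta$, so the hypothesis gives at most $\lambda$ values.
\end{itemize}
Also, what $(k-1)$-end-homogeneity contributes to the transfer is not the existence of a representative; that holds by construction. Its role is that this is one relation on all of $A$, so the same $\bar d\in D$ serves both $r_\alpha$ and $r_{\alpha'}$.

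With these repairs, the transfer yields $a_\alpha a_J\equiv_{A_\alpha}a_{\alpha'}a_{J'}$ for $\alpha<\alpha'$ in $S'$, with $J,J'$ above $\alpha,\alpha'$ respectively. This gives $k$-end-homogeneity and non-$k$-splitting over $A_\delta$, and the induction from Lemma~\ref{lem:tail} finishes. Your fallback recursion is unnecessary.

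Compared with the paper, the repaired argument avoids the elementary extension but keeps $k$-types throughout. The paper instead reduces to the one-variable criterion of Fact~\ref{lem:ordinary}.
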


\begin{proof}
Let $(a_i)_{i<\theta}$ be the given sequence, where $\theta=\lambda^+$.
Put $A_\alpha=B\cup a_{<\alpha}$ and $A=B\cup a_{<\theta}$.
Adjoin tuples $c_1,\ldots,c_{k-1}$ in an elementary
extension so that $(a_i)_{i<\theta}\concat(c_1,\ldots,c_{k-1})$
is still $(k-1)$-end-homogeneous over $B$.
To do this, realize the increasing union
$$
\bigcup_{\alpha<\theta}\tp(a_I/A_\alpha),
\qquad I\in[\theta\setminus\alpha]^{k-1}.
$$

Put $c=c_1\cdots c_{k-1}$.
Then we have, for every $\alpha<\theta$ and $1\leq j<k$,
\begin{equation}\label{eq:restrictions}
 a_\alpha c_1\cdots\widehat{c_j}\cdots c_{k-1}
 \equiv_{A_\alpha}c_1\cdots c_{k-1}.
\end{equation}

By Fact~\ref{fact:smallbase}, choose $C_\alpha\subseteq A_\alpha$,
$|C_\alpha|\leq{|T|}$, such that
$\tp(a_\alpha;c_1;\ldots;c_{k-1}/A_\alpha)$ does not $k$-split over
$C_\alpha$. We claim that
\begin{equation}\label{eq:ordinary}
 \tp(a_\alpha/A_\alpha c)\text{ does not split over }C_\alpha c.
\end{equation}
Indeed, if $\bar b,\bar b'\in A_\alpha^{<\omega}$ satisfy
$\bar b\equiv_{C_\alpha c}\bar b'$, then \eqref{eq:restrictions} transfers
this equality to the parameters obtained by omitting any one block from
$a_\alpha,c_1,\ldots,c_{k-1}$. Non-$k$-splitting therefore gives
$\bar b\equiv_{C_\alpha a_\alpha c}\bar b'$.
For tuples from $A_\alpha c$, equality over $C_\alpha c$ forces
coordinates belonging to $c$ to match. Delete those coordinates,
apply the preceding argument, and reinsert them.

On the stationary set
$\{\alpha<\theta:\cf(\alpha)={|T|}^+\}$, the coordinates of
$C_\alpha\setminus B$ occur below some index $\beta_\alpha<\alpha$.
Here ${|T|}^+<\theta$, since $2^{{|T|}}\leq\lambda^{{|T|}}=\lambda$.
Fodor's lemma gives a stationary set on which $\beta_\alpha$ is constant,
say with value $\beta$. Since $|A_\beta|\leq\lambda$ and
$\lambda^{{|T|}}=\lambda$, we may thin further to a stationary set
$S$ on which $C_\alpha$ is fixed, say with value $C$.

For each $m<\omega$, let $R_m\subseteq A^m$ contain exactly one
representative of each equivalence class for the equivalence relation
defined by letting
$\bar b\sim_m\bar b'\Longleftrightarrow\bar b\equiv_{Cc}\bar b'$.
Let $D$ consist of $C$ and all coordinates of tuples in
$\bigcup_{m<\omega}R_m$. Since $|Cc|\leq{|T|}$, we have
$|R_m|\leq2^{{|T|}}$ and hence $|D|\leq2^{{|T|}}\leq\lambda$.
Choose $\delta<\theta$ with $D\subseteq A_\delta$, and discard indices
at most $\delta$ from $S$.
For $\alpha\in S$, put
$q_\alpha=\tp(a_\alpha,c_1,\ldots,c_{k-1}/A_\delta)$.
Let $p_i(x_0,\ldots,\widehat{x_i},\ldots,x_{k-1})$ be
$\tp(c/A_\delta)$ with its variable blocks renamed in order $(i<k)$.
By \eqref{eq:restrictions}, $q_\alpha\supseteq\bigcup_{i<k}p_i$.
The hypothesis on common complete extensions, applied to these types
over $A_\delta$, where $|A_\delta|\leq\lambda$, gives at most
$\lambda$ possible $q_\alpha$. Thin to a stationary $S'\subseteq S$
on which they agree.
Thus the $a_\alpha$, $\alpha\in S'$, have the same type over
$A_\delta c$.

If $\alpha<\alpha'$ belong to $S'$ and $\bar b\in A_\alpha^m$,
choose $\bar d\in R_m\subseteq D^m$ with
$\bar b\equiv_{Cc}\bar d$. Applying \eqref{eq:ordinary} at both indices
gives, for every formula $\varphi$,
$$
\varphi(a_\alpha;\bar b,c)
\ \iff\ \varphi(a_\alpha;\bar d,c)
\ \iff\ \varphi(a_{\alpha'};\bar d,c)
\ \iff\ \varphi(a_{\alpha'};\bar b,c).
$$
Consequently $a_\alpha\equiv_{A_\alpha c}a_{\alpha'}$.
The sequence $(a_\alpha)_{\alpha\in S'}$ is end-homogeneous over
$A_\delta c$. By \eqref{eq:ordinary}, restriction and enlargement of the
base show that each successive type does not split over $A_\delta c$.
Lemma~\ref{lem:ordinary} therefore gives indiscernibility over
$A_\delta c$, hence over $B$.
\end{proof}

Theorem \ref{thm:basic} now follows directly from Lemma \ref{lem:fodor}.

\begin{proof}[Proof of Theorem \ref{thm:basic}]
By Lemma~\ref{lem:ramification}, the given sequence
$(a_i)_{i<\beth_{k-1}(\lambda)^+}$ has a $(k-1)$-end-homogeneous
subsequence $(b_j)_{j<\lambda^++(k-1)}$ over $B$, where
$b_j=a_{\xi_j}$ and the indices $\xi_j$ are strictly increasing.
Lemma~\ref{lem:count} shows that the hypothesis of
Lemma~\ref{lem:fodor} is satisfied, so we may apply it  to the first
$\lambda^+$ tuples gives a $B$-indiscernible subsequence indexed by a
cofinal $Y\subseteq\lambda^+$ of order type $\lambda^+$,
say $(c_i)_{i<\lambda^+}$. Then the sequence
$(c_i)_{i<\lambda^+}\;^\frown
(b_{\lambda^+},\ldots,b_{\lambda^++k-2})$
is a $B$-indiscernible sequence.
\end{proof}

\begin{corollary}\label{cor:larger}
Suppose $T$ has bounded $k$-splitting and
$\mu\geq 2^{{|T|}}$. For every positive finite $n$,
$$
\beth_k(\mu)^+\longrightarrow
\bigl((2^\mu)^++(k-1)\bigr)_{T,n}.
$$
The same conclusion holds over any parameter set $B$ with $|B|\leq\mu$.
\end{corollary}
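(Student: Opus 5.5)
We will derive the corollary from Theorem~\ref{thm:main} by applying it at the cardinal $\lambda=2^\mu$. The required cardinal arithmetic is routine, and the main task is to check the hypotheses of Theorem~\ref{thm:main} and compare the two lengths.

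\textbf{Step 1: the hypotheses hold at $\lambda=2^\mu$.}
\begin{itemize}
\item Since $\mu\geq2^{|T|}\geq|T|$, we get $\lambda^{|T|}=(2^\mu)^{|T|}=2^{\mu\cdot|T|}=2^\mu=\lambda$.
\item Also $|B|\leq\mu\leq\lambda$.
\item For the proof of Theorem~\ref{thm:main} to go through, Lemma~\ref{lem:count} must yield the hypothesis of Lemma~\ref{lem:fodor}. This needs $\lambda\geq2^{2^{|T|}}$. It holds because $\lambda=2^\mu\geq2^{2^{|T|}}$, which is exactly where the assumption $\mu\geq2^{|T|}$ is used.
\end{itemize}
So all hypotheses of Theorem~\ref{thm:main} are satisfied at $\lambda$.

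\textbf{Step 2: compare lengths.} We have $\beth_{k-1}(\lambda)=\beth_{k-1}(2^\mu)=\beth_{k-1}(\beth_1(\mu))=\beth_k(\mu)$. Hence $\beth_{k-1}(\lambda)^+=\beth_k(\mu)^+$. Likewise, the output length $\lambda^++(k-1)$ is $(2^\mu)^++(k-1)$.

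\textbf{Step 3: conclude.} Theorem~\ref{thm:main} therefore gives, for every sequence of $n$-tuples of length $\beth_k(\mu)^+$, a $B$-indiscernible subsequence of order type $(2^\mu)^++(k-1)$. This is exactly the stated partition relation. Taking $B=\varnothing$ gives the unsubscripted version.

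\textbf{Main obstacle.} The only delicate point is the third item of Step 1: the threshold $\lambda\geq2^{2^{|T|}}$ in Lemma~\ref{lem:count}. This threshold is the reason one cannot simply take $\lambda=\mu$, and it is why the corollary passes to $2^\mu$ at the cost of the identity $\beth_{k-1}(2^\mu)=\beth_k(\mu)$.
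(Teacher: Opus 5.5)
Your proposal is correct and follows the paper's proof exactly: set $\lambda=2^\mu$, check $\lambda^{|T|}=\lambda$, $|B|\leq\lambda$, $\lambda\geq 2^{2^{|T|}}$ and $\beth_{k-1}(\lambda)=\beth_k(\mu)$, then apply Theorem~\ref{thm:main}. You also note that $\lambda\geq 2^{2^{|T|}}$ is what lets Lemma~\ref{lem:count} supply the hypothesis of Lemma~\ref{lem:fodor}, a condition the statement of Theorem~\ref{thm:main} does not list, and this is why the paper records that inequality.
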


\begin{proof}
Set $\lambda=2^\mu$. Then $\lambda\geq2^{2^{|T|}}$,
$\lambda^{|T|}=\lambda$, and
$\beth_{k-1}(\lambda)=\beth_k(\mu)$. Apply Theorem~\ref{thm:main}.
\end{proof}

\section{Extraction from Weak-Compactness}\label{sec:weak}

It is important to note that the indiscernible extraction results in the previous section are weaker than the ones originally proven by Shelah for stable theories or by Hanson for theories with bounded $k$-splitting. In those contexts, an indiscernible subsequence is able to be extracted of the same length of a given sequence. In \cite{HansonBoundedksplitting}, this result for theories with bounded $k$-splitting is achieved assuming the existence of a $k$-ineffable cardinal. Here, we show that this large cardinal assumption can be weakened to the existence of a weakly compact cardinal, and that this is in fact optimal. Recall that an
uncountable cardinal $\kappa$ is \emph{weakly compact} if and only if it
is strongly inaccessible and has the tree property: every tree of height
$\kappa$ whose levels have size less than $\kappa$ has a branch of length
$\kappa$. Equivalently, $\kappa$ is weakly compact if and only if the partition relation $\kappa\to(\kappa)^2_2$ holds.

\begin{theorem}\label{thm:weak}
Suppose $T$ has bounded $k$-splitting, where $1\leq k<\omega$, and
$\kappa>{|T|}+|B|+|\gamma|$ is weakly compact.
Every sequence of $\gamma$-tuples of length $\kappa$ has a
$B$-indiscernible subsequence of order type $\kappa$.
\end{theorem}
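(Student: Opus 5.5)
Following the ZFC route of the previous section, we obtain a $k$-end-homogeneous subsequence of length $\kappa$ and then apply Lemma~\ref{lem:tail} with $\theta=\kappa$. Put $\lambda=|T|+|B|+|\gamma|<\kappa$. Since $\kappa$ is regular and $\cf(\kappa)=\kappa>\lambda$, Lemma~\ref{lem:tail} yields a $B$-indiscernible final segment. A final segment of a sequence of order type $\kappa$ again has order type $\kappa$, so this finishes the proof. Everything therefore reduces to the following claim: for every $1\le r<\omega$, every sequence of $\gamma$-tuples of length $\kappa$ has an $r$-end-homogeneous subsequence over $B$ of order type $\kappa$. This claim replaces Lemma~\ref{lem:ramification}, where the length had to drop by a beth.

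We prove the claim for $r=1$ using the tree property, rerunning the construction from the proof of Lemma~\ref{lem:ramification} but for $\kappa$ many stages. We define partitions $\mathcal P_\alpha$ for $\alpha<\kappa$ exactly as there: split by type over $D_X a_{\min X}$ at successors and take nonempty intersections of chains at limits. These partitions form a tree under reverse inclusion. At a stage $\alpha<\kappa$ the parameter set $D_X$ has size at most $\lambda+|\alpha|<\kappa$, so each node has at most $2^{\lambda+|\alpha|}<\kappa$ children, by strong inaccessibility. For the same reason, by induction using regularity at limit levels, $|\mathcal P_\alpha|<\kappa$. Fewer than $\kappa$ indices are removed before stage $\alpha$, so every level is nonempty and the tree has height $\kappa$. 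The tree property then gives a cofinal branch $(X_\alpha)_{\alpha<\kappa}$. The nonempty-intersection requirement at limits causes no problem, because a branch through the tree consists of nodes actually present at each level. The minima $i_\alpha=\min X_\alpha$ are strictly increasing, and as in the earlier proof $(a_{i_\alpha})_{\alpha<\kappa}$ is end-homogeneous over $B$.

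For $r>1$ we induct, but the induction in Lemma~\ref{lem:ramification} does not transfer verbatim, since we cannot append a tail point $d$ of length $\kappa$. Instead we run the tree construction directly for $r$-tuples. Take a node $X$ at stage $\alpha$, and let $e_\xi$ for $\xi<\alpha$ be the previously selected elements. We split $X\setminus\{\min X\}$ by the following equivalence: $i\sim j$ if and only if, for every $(r-1)$-subset $\{\xi_1<\dots<\xi_{r-1}\}$ of the selected indices together with $\min X$, and every $\beta\le\xi_1$, the types match:
\[
\tp(e_{\xi_1}\cdots e_{\xi_{r-1}}a_i/Be_{<\beta})=\tp(e_{\xi_1}\cdots e_{\xi_{r-1}}a_j/Be_{<\beta}).
\]
The number of classes is again $2^{\lambda+|\alpha|}<\kappa$. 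A cofinal branch then produces a sequence in which $\tp(a_I/Be_{<\beta})$ depends only on $I$ minus its last element. Combining this with the inductive hypothesis, applied to the resulting sequence over $B$ at arity $r-1$ and nested through the branch, gives $r$-end-homogeneity. Alternatively, one can use the equivalent partition property $\kappa\to(\kappa)^{r}_{<\kappa}$ for weakly compact $\kappa$, iterated along initial segments. That version needs care, because end-homogeneity involves parameters $a_{<\alpha}$ that grow with $\alpha$.

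The main obstacle is this $r>1$ step: bookkeeping so that types over the growing base $Be_{<\beta}$ stay homogeneous for all $r$-subsets, rather than only for $r$-subsets whose last element is taken cofinally late. If the direct construction becomes messy, the first thing to try is a stronger single-stage construction. Build one tree whose level-$\alpha$ nodes code the complete types of all finite increasing tuples from the selected set plus one new point over $B$. There are fewer than $\kappa$ such codes by inaccessibility. Then verify $r$-end-homogeneity for every finite $r$ simultaneously, by induction on $r$ along the branch.

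Optimality of weak compactness is not part of this statement and is not addressed here.
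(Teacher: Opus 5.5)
Your reduction to Lemma~\ref{lem:tail} and your $r=1$ step coincide with the paper's proof. The gap is the step $r>1$, which you leave unresolved. Your reason for abandoning the inductive scheme of Lemma~\ref{lem:ramification} is also mistaken. The tail point $d$ cannot be taken from the sequence, but it can be realized in the monster. If $(b_i)_{i<\kappa}$ is end-homogeneous over $B$, then $p_\alpha=\tp(b_i/Bb_{<\alpha})$ for $i\geq\alpha$ does not depend on $i$, and $p_\alpha\subseteq p_\beta$ for $\alpha<\beta$. So $\bigcup_{\alpha<\kappa}p_\alpha$ is a complete consistent type; let $d$ realize it. Then $(b_i)_{i<\kappa}\concat(d)$ is end-homogeneous over $B$ and $|Bd|<\kappa$. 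The induction hypothesis over $Bd$ gives a subsequence $(e_\xi)_{\xi<\kappa}$ that is $(r-1)$-end-homogeneous over $Bd$. Write $I,J\in[\kappa\setminus\alpha]^r$ as $I'\cup\{i\}$ and $J'\cup\{j\}$, with $i,j$ the largest elements. Then
\[
e_I\equiv_{Be_{<\alpha}}e_{I'}d\equiv_{Be_{<\alpha}}e_{J'}d\equiv_{Be_{<\alpha}}e_J .
\]
The outer steps hold by the choice of $d$, and the middle one holds because the $(r-1)$-end-homogeneity is over $Bd$. That is the entire inductive step in the paper.

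None of your substitutes supplies this. Your splitting criterion in the direct tree construction is implied by $a_i\equiv_{Be_{<\alpha}a_{\min X}}a_j$, the $r=1$ criterion. So a branch yields only end-homogeneity, and ``the type depends only on $I$ minus its last element'' is just a restatement of end-homogeneity. Your single-stage tree codes the same information: the type of the new point over $B$ and the selected points. Applying the arity-$(r-1)$ hypothesis over $B$ alone gives $e_{I'}\equiv_{Be_{<\alpha}}e_{J'}$. But to compare $e_{I'}e_m$ with $e_{J'}e_m$ you need this over $Be_{<\alpha}e_m$ for a later $m$, and nothing built from past parameters provides it.

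Concretely, take distinct vertices in the random graph with $R(e_\xi,e_\eta)\iff\xi$ even, for $\xi<\eta$. They form an end-homogeneous sequence over $\varnothing$, and your tree construction run on them returns the whole sequence. Yet $e_0e_1\not\equiv e_1e_2$, so the sequence is not $2$-end-homogeneous. The realized point $d$ is exactly the ``later point'' that turns the remaining $(r-1)$-ary homogenization into an instance of the induction hypothesis.

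Your alternative via $\kappa\to(\kappa)^r_{<\kappa}$ would need a further idea. As you note, the number of relevant types over $Be_{<\alpha}$ is not bounded below $\kappa$ uniformly in $\alpha$.
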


\begin{proof}
Choose an infinite ${|T|}+|B|+|\gamma|\leq\lambda<\kappa$.
We first prove, by induction on $r\geq1$, that every sequence of
$\gamma$-tuples of length $\kappa$ has an $r$-end-homogeneous
subsequence of order type $\kappa$.

For $r=1$, carry out the partition construction in
Lemma~\ref{lem:ramification} on the given index set $\kappa$, at every
stage $\alpha<\kappa$. Its levels satisfy
$$
 |\mathcal P_\alpha|\leq2^{\lambda+|\alpha|}<\kappa.
$$
The strict inequality uses the strong inaccessibility of $\kappa$.
Indeed, the right-hand side bounds the number of types over the parameters
on a branch of length $\alpha$, and at a limit it also bounds the number
of preceding chains. Every $\mathcal P_\alpha$ is nonempty by the
regularity of $\kappa$.
Order the nodes $(\alpha,X)$, $X\in\mathcal P_\alpha$, by
$$
 (\alpha,X)<(\beta,Y)\quad\Longleftrightarrow\quad
 \alpha<\beta\text{ and }Y\subseteq X.
$$
Each node has exactly one predecessor at every previous level.
After adjoining a root, this is a tree to which the tree property
applies. The downward closure of a cofinal branch gives classes
$X_\alpha\in\mathcal P_\alpha$ for all $\alpha<\kappa$.
Their minima strictly increase and index an end-homogeneous
subsequence of length $\kappa$, by the same type equalities as in
Lemma~\ref{lem:ramification}.

For $r>1$, first obtain an end-homogeneous subsequence
$(b_i)_{i<\kappa}$. In an elementary extension choose a
$\gamma$-tuple $d$ realizing
$$
 \bigcup_{\alpha<\kappa}\tp(b_i/Bb_{<\alpha}),
 \qquad \alpha\leq i<\kappa.
$$
The types in this union are well-defined and increasing, so the union
is consistent and complete. Consequently
$(b_i)_{i<\kappa}\concat(d)$ is end-homogeneous over $B$.
Apply induction over $Bd$ to $(b_i)_{i<\kappa}$.
Replacing the last block in each compared $r$-tuple by $d$, exactly
as in Lemma~\ref{lem:ramification}, shows that the selected
$(r-1)$-end-homogeneous sequence over $Bd$ is
$r$-end-homogeneous over $B$.

Take $r=k$. Lemma~\ref{lem:tail} applies at $\theta=\kappa$, since
$\cf(\kappa)=\kappa>\lambda$. Its indiscernible final segment has order
type $\kappa$.
\end{proof}

It is clear from the definition of a weakly compact cardinal that this result is optimal. Let
$T_{\mathrm{rg}}$ be the theory of the random graph with edge relation
$R$. It is binary and
therefore has bounded $k$-splitting for every $k\geq2$.
Suppose $\kappa$ is uncountable and
$\kappa\longrightarrow(\kappa)_{T_{\mathrm{rg}},1}$. Given any coloring
$c:[\kappa]^2\to2$, compactness gives distinct vertices
$(a_\alpha)_{\alpha<\kappa}$ in a model of $T_{\mathrm{rg}}$ such that
$$
 R(a_\alpha,a_\beta)\quad\Longleftrightarrow\quad
 c(\{\alpha,\beta\})=1\qquad(\alpha<\beta<\kappa).
$$
Every finite part of these requirements is realized in the random graph.
An indiscernible subsequence of length $\kappa$ gives a homogeneous set
for $c$. Thus $\kappa\longrightarrow(\kappa)^2_2$, hence $\kappa$ is weakly compact.

\section{Total bu-Morley sequences}

As an application, Hanson showed that indiscernible extraction in theories with bounded $k$-splitting implies the existence of certain total bu-Morley sequences. We show that our weaker indiscernible extraction result in ZFC actually suffices for this goal.
We begin by briefly recalling the relevant definitions. For a more thorough discussion of ultraimaginaries, we refer the reader to \cite{HansonBoundedksplitting} and \cite{HansonBU}.

An \emph{ultraimaginary} is a class $c/E$, where $E$ is an
automorphism-invariant equivalence relation on tuples.
Write $\bdd(X)$ for the class of ultraimaginaries with bounded
$\Aut(\C/X)$-orbit. Equivalently, $c/E\in\bdd(X)$ if its orbit
has size at most $2^{|X|+|c|+{|T|}}$, in a sufficiently large monster
\cite{HansonBU}.
We then define an independence relation by setting
$$
 u\indbu_Bv\quad\Longleftrightarrow\quad
 \bdd(Bu)\cap\bdd(Bv)=\bdd(B).
$$
It is invariant under automorphisms and monotone in $u,v$.

A \emph{bu-Morley sequence} over $B$ is a $B$-indiscernible sequence
$(a_i)_{i<\omega}$ such that $a_i\indbu_Ba_{<i}$ for each $i<\omega$.
It is \emph{total} if, for all set-sized linearly ordered sequences $I,J$,
$$
 I+J\equiv_B^{\EM}(a_i)_{i<\omega}
 \quad\Longrightarrow\quad I\indbu_BJ.
$$
Here $I+J$ is concatenation, and equality of Ehrenfeucht--Mostowski
(EM) types means that the types over $B$ of corresponding finite
increasing tuples agree.

We use the following two results from \cite{HansonBU}.

\begin{fact}\label{fact:chain}
If $(c_i)_{i<\omega}$ is bu-Morley over $B$ and indiscernible over
$Be$, where $e$ is any set-sized tuple, then $c_0\indbu_Be$.
\end{fact}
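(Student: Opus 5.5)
The plan is to argue directly from the definitions, using a pigeonhole argument on a long stretch of the sequence. Fix an ultraimaginary $x=a/E\in\bdd(Bc_0)\cap\bdd(Be)$. The goal is $x\in\bdd(B)$; the inclusion $\bdd(B)\subseteq\bdd(Bc_0)\cap\bdd(Be)$ is trivial.

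First, by compactness, stretch $(c_i)_{i<\omega}$ to a $Be$-indiscernible sequence $(c_i)_{i<\kappa}$ with the same EM-type over $Be$. Here $\kappa$ is a regular cardinal larger than the bound $2^{|Be|+|a|+|T|}$ on the size of the $\Aut(\C/Be)$-orbit of $x$, and the monster is taken large enough to contain this sequence. For each $i<\kappa$, strong homogeneity gives some $\sigma_i\in\Aut(\C/Be)$ with $\sigma_i(c_0)=c_i$; put $x_i=\sigma_i(x)$. Then:
\begin{itemize}
\item each $x_i$ lies in the $\Aut(\C/Be)$-orbit of $x$;
\item $x_i\in\bdd(Bc_i)$, because $x\in\bdd(Bc_0)$ and $\sigma_i$ fixes $B$ and sends $c_0$ to $c_i$.
\end{itemize}
The orbit has fewer than $\kappa$ elements, so by pigeonhole there are indices $i<j<\kappa$ with $x_i=x_j$.

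Next, pull this coincidence back to the first two terms. By $Be$-indiscernibility, $c_ic_j\equiv_{Be}c_0c_1$, so there is $\tau\in\Aut(\C/Be)$ with $\tau(c_i)=c_0$ and $\tau(c_j)=c_1$. Set $y=\tau(x_i)=\tau(x_j)$. Then:
\begin{itemize}
\item $y\in\bdd(Bc_0)\cap\bdd(Bc_1)$;
\item $y\in\bdd(Bc_{<1})\cap\bdd(Bc_1)$, since $c_{<1}=c_0$;
\item $y\in\bdd(B)$, because the original sequence is bu-Morley, so $c_1\indbu_Bc_{<1}$.
\end{itemize}
Finally, $y=\tau\sigma_i(x)$ with $\tau\sigma_i\in\Aut(\C/B)$. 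The orbit of $y$ under $\Aut(\C/B)$ is bounded, hence so is that of $x$, giving $x\in\bdd(B)$.

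I expect the main obstacle to be the bookkeeping with ultraimaginaries rather than the combinatorics. One must justify that the orbit of $x$ over $Be$ is bounded by a cardinal independent of the monster; I would use the bound $2^{|X|+|c|+|T|}$ recalled above. One must also check that stretching is legitimate and that automorphisms act on ultraimaginaries compatibly with $\bdd$, which holds since $E$ is automorphism-invariant. The point of routing through $\tau$ is that I never need the stretched sequence itself to be bu-Morley, only the original pair $(c_0,c_1)$. This avoids any finite-character issue for $\indbu$ over infinite sets.
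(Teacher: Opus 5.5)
Your argument is correct. The paper gives no proof of this statement; it imports it as a Fact from Hanson's work on bounded ultraimaginary independence, so there is no in-paper argument to compare with. What you give is the standard conjugate-counting argument, carried out with automorphisms so that it applies to ultraimaginaries. It is the same argument that shows, for $\mathrm{acl}^{\mathrm{eq}}$-independence, that a $Be$-indiscernible sequence with $c_0$ independent from $c_1$ forces $c_0$ independent from $e$.

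Compared with simply citing, your route has two advantages:
\begin{enumerate}
\item It shows that only the first instance of the bu-Morley condition is used. You in fact prove the formally stronger statement that any $Be$-indiscernible $(c_i)_{i<\omega}$ with $c_0\indbu_B c_1$ satisfies $c_0\indbu_B e$. Pulling back along $\tau$ is what lets you avoid asking whether the stretched sequence is bu-Morley. The stretched sequence also need not extend the original one, since $\indbu$ is automorphism-invariant.
\item It is self-contained except for the monster-independent bound $2^{|Be|+|a|+|T|}$ on bounded orbits. The paper also quotes that bound from Hanson. It comes from the fact that a bounded invariant equivalence relation on $\tp(a/Be)$ is coarser than Lascar strong type over $Be$.
\end{enumerate}
One minor remark: the regularity of $\kappa$ plays no role. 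Any $\kappa$ exceeding the orbit size suffices for the pigeonhole step.
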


\begin{fact}\label{fact:totality}
A $B$-indiscernible sequence $(a_i)_{i<\omega}$ is total bu-Morley
over $B$ if and only if there are infinite sequences $I,J$ such that
$I+J\equiv_B^{\EM}(a_i)_{i<\omega}$ and $I\indbu_BJ$.
\end{fact}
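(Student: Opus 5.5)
The forward direction is immediate. If $(a_i)_{i<\omega}$ is total bu-Morley over $B$, I will use compactness to realize its EM-type over $B$ by a sequence indexed by $\omega+\omega$, and split it as $I+J$ with $I,J$ both of order type $\omega$. Totality then gives $I\indbu_BJ$ directly. So all the work is in the converse.

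For the converse, fix infinite $I,J$ with $I+J\equiv_B^{\EM}(a_i)_{i<\omega}$ and $I\indbu_BJ$. There are two things to show.

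\emph{Totality.} Let $I',J'$ be arbitrary set-sized sequences with $I'+J'\equiv^{\EM}_B(a_i)_{i<\omega}$; I need $I'\indbu_BJ'$. By monotonicity of $\indbu$, I may enlarge $I'$ and $J'$ to long sequences without loss. I plan to use Fact~\ref{fact:chain} in block form:
\begin{enumerate}
\item Using compactness and the $B$-indiscernibility of the EM-type, build a $B$-indiscernible sequence $I'+J_0+J_1+\cdots$ with each $J_n$ of the same order type as $J'$ and $J_0=J'$. Then the block sequence $(J_n)_{n<\omega}$ is indiscernible over $BI'$.
\item Show that $(J_n)_{n<\omega}$ is bu-Morley over $B$, i.e.\ $J_n\indbu_BJ_{<n}$.
\item Fact~\ref{fact:chain}, applied with $e=I'$, then gives $J_0\indbu_BI'$. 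By symmetry of the definition of $\indbu$, this is $I'\indbu_BJ'$.
\end{enumerate}

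\emph{Step (2) is the main obstacle.} The hypothesis only gives independence for one particular cut $I+J$ with the given order types. I must transport it to cuts $J_{<n}+J_n$ of possibly different (larger) order types. My first attempt will be a stretching argument on the left piece. By indiscernibility, any sequence $I''$ with $I''+J\equiv^{\EM}_B(a_i)$ can be moved by an automorphism fixing $BJ$ so that $I''$ and $I$ have a common cofinal part. I would then argue that $\bdd(BI'')\subseteq\bdd(BI)$ up to this identification, using that $I''$ and $I$ realize the same Lascar-strong-type configuration over $BJ$. I will do the same on the right piece to change the order type of $J$. This reduces the independence for arbitrary order types to the given one. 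If this comparison of bounded closures fails in general, the fallback is to carry out the whole construction with $I$ and $J$ first replaced by very long indiscernible extensions, chosen by an Erdős--Rado-type extraction so that the independence is preserved.

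\emph{Bu-Morley condition.} Finally, $a_i\indbu_Ba_{<i}$ follows from totality. I extend $a_{<i}$ and $a_i$ to infinite sequences $I'+J'$ with the same EM-type, with $a_{<i}$ an initial segment of $I'$ and $a_i$ the first element of $J'$. Totality gives $I'\indbu_BJ'$, and monotonicity then gives $a_i\indbu_Ba_{<i}$.
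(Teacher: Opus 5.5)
The paper does not prove this statement; it imports it from Hanson's work on $\indbu$. So your argument has to stand on its own. Your forward direction is fine, and so is the final deduction of $a_i\indbu_Ba_{<i}$ from totality. The converse, however, has a genuine gap at step (2), and that step is the whole content of the converse.

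Write $P(\alpha,\beta)$ for the statement that $X\indbu_BY$ whenever $X+Y\equiv^{\EM}_B(a_i)_{i<\omega}$ with $X,Y$ of order types $\alpha,\beta$. By invariance this depends only on $(\alpha,\beta)$, and by monotonicity it passes to suborders. Step (2) asks for $P(\beta\cdot n,\beta)$, where $\beta$ is the order type of $J'$, and this is itself an instance of totality. The case $n=1$ alone, for all $\beta$, already gives totality by monotonicity: given $I',J'$, take $\beta=\mathrm{otp}(I')+\mathrm{otp}(J')$ and place $I'$ inside $J_0$ and $J'$ inside $J_1$. So the detour through Fact~\ref{fact:chain} reduces the claim to itself. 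Enlarging $I'$ and $J'$ first only makes step (2) harder. What is actually needed is a way to pass from $P(\alpha_0,\beta_0)$, for the one given pair of infinite order types, to $P(\beta,\beta)$ for arbitrarily large $\beta$.

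Your sketch for step (2) does not supply this.

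\begin{itemize}
\item \emph{The common cofinal part.} It need not exist, for instance if $I$ has type $\omega$ and $I''$ has a last element or uncountable cofinality.
\item \emph{The inclusion goes the wrong way.} Take $I$ of type $\omega$ and $I''$ of type $\omega+\omega$. The identification makes $I$ a proper final segment of $I''$, so $\bdd(BI)\subseteq\bdd(BI'')$. The reverse inclusion fails: an element of $I''\setminus I$ lies in $\bdd(BI'')$ but, already for distinct elements of a pure infinite set, not in $\bdd(BI)$.
\item \emph{What would be needed.} The required statement is $\bdd(BI'')\cap\bdd(BJ)\subseteq\bdd(BI)$. Lascar strong automorphisms over $BJ$ cannot give this, because they only carry membership in $\bdd(BI'')$ to $\bdd$ of conjugates of $I''$, never to the smaller set $I$.
\item \emph{The Erd\H{o}s--Rado fallback.} It cannot help either. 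Whether $X\indbu_BY$ holds is already fixed by the EM-type and the two order types. The issue is purely how independence propagates between order types, and extraction does not touch that.
\end{itemize}

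Here is exactly where the difficulty lies. If $(c_i)$ is $Be$-indiscernible, then $c_0\equiv^{\mathrm L}_{Be}c_1$. A Lascar strong automorphism over $Be$ therefore puts any $x\in\bdd(Bc_0)\cap\bdd(Be)$ into $\bdd(Bc_1)$ as well, so only $c_0\indbu_Bc_1$ is really needed. Suppose the given $I$ and $J$ both contain $\omega$-subsequences, and run this argument with blocks of type $\omega$. It correctly yields $P(\alpha,\omega)$ for every $\alpha$. But it enlarges only one side. Enlarging both sides at once requires a further idea, and your proposal does not contain one.
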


The next lemma is a version of Hanson's tree construction used in
\cite{HansonBoundedksplitting}.

\begin{lemma}\label{lem:butree}
For every infinite cardinal $\mu$, parameter set $B$, and set-sized
tuple $a$, there are tuples $(d_\xi)_{\xi<\mu}$, with
$d_\xi\equiv_Ba$, and, for each $\xi<\mu$, a bu-Morley sequence
$(c_i^\xi)_{i<\omega}$ over $B$ such that
\begin{equation}\label{eq:treeinput}
 d_{<\xi}\subseteq\operatorname{rng}(c_0^\xi),
 \qquad
 (c_i^\xi)_{i<\omega}\text{ is indiscernible over }Bd_{>\xi}.
\end{equation}
Here $d_{<\xi}$ and $d_{>\xi}$ denote the sets of coordinates of the
earlier and later tuples, respectively.
\end{lemma}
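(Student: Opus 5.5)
The plan is a forward transfinite recursion on $\xi<\mu$. At stage $\xi$ we will choose both $d_\xi$ and the bu-Morley sequence $(c^\xi_i)_{i<\omega}$. Throughout we maintain the invariant
$$
(\ast)_\xi:\quad \text{for every }\eta<\xi,\ (c^\eta_i)_{i<\omega}\text{ is bu-Morley over }B,\ d_{<\eta}\subseteq\operatorname{rng}(c^\eta_0),\ \text{and it is indiscernible over }Bd_{(\eta,\xi)}.
$$
Here $d_{(\eta,\xi)}$ denotes the coordinates of $d_\zeta$ for $\eta<\zeta<\xi$. At limit stages $(\ast)$ is preserved automatically, since indiscernibility over an increasing union of parameter sets is a local condition. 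Once the recursion reaches $\mu$, $(\ast)_\mu$ is exactly \eqref{eq:treeinput}.

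The successor step has two tasks.

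\textbf{Step 1: choosing $c^\xi$.} We need a bu-Morley sequence over $B$ whose first term contains $d_{<\xi}$. To get it, let $e$ enumerate $d_{<\xi}$ and invoke the existence of bu-Morley sequences in any type over $B$ from \cite{HansonBU}. The idea behind that result is as follows. Take a very long sequence of realizations of $\tp(e/B)$ in which each term is chosen $\indbu_B$ from the previous ones; this is possible because $\bdd$ of a set is small, of size bounded in terms of $2^{|X|+|c|+|T|}$. Then extract a $B$-indiscernible sequence from it using Erdős--Rado and compactness; its EM-type preserves $\indbu_B$-independence of each term from its predecessors. Finally, move the resulting sequence by an automorphism over $B$ so that its first term is literally $e$. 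At stage $\xi$ there is no constraint from later $d$'s yet, so this step is unconditional.

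\textbf{Step 2: choosing $d_\xi$.} We need $d_\xi\equiv_B a$ such that every $c^\eta$ with $\eta<\xi$ (and also $c^\xi$) stays indiscernible over $Bd_{(\eta,\xi]}$. We will use the standard trick. Pick any $d\equiv_B a$. Stretch the relevant sequences to a large length $\kappa$ by compactness, keeping their joint type over $Bd_{<\xi}$. Extract, by Erdős--Rado applied simultaneously to the whole array $(c^\eta)_{\eta\leq\xi}$, subsequences that are indiscernible over $Bd_{<\xi}d$ in the appropriate sense. The resulting array has the same joint EM-type over $Bd_{<\xi}$ as the original. By compactness and homogeneity of $\C$ there is then an automorphism fixing $Bd_{<\xi}$ that carries the extracted array back onto the original sequences. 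Let $d_\xi$ be the image of $d$. Then $d_\xi\equiv_B a$, and the required indiscernibility over $Bd_{(\eta,\xi]}$ holds.

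\textbf{Main obstacle.} I expect the delicate point to be Step 2, because the base sets differ across $\eta$. Each $c^\eta$ is only indiscernible over $Bd_{(\eta,\xi)}$, not over $Bd_{<\xi}$: its first term contains $d_{<\eta}$. So a single automorphism fixing $Bd_{<\xi}$ must be handled carefully. The plan is to pass to EM-types: for each $\eta$ we only need preservation of the EM-type of $c^\eta$ over $Bd_{(\eta,\xi)}$ together with its type over $d_{<\eta}$, and we perform the extraction coordinatewise in $\eta$. If the simultaneous extraction proves awkward, the fallback is to carry a stronger invariant that makes the base uniform. For instance, build each $c^\eta$ so that $(c^\eta_i)_{i\geq 1}$ is indiscernible over $Bc^\eta_0 d_{(\eta,\xi)}$, and handle the $\eta$'s in increasing order. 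At each step we would then realign only the tail of $c^\eta$ by an automorphism fixing everything already fixed. This is a finite-character compactness argument whose consistency can be checked on finitely many $\eta$'s at a time.
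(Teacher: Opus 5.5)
\textbf{There is a genuine gap in your Step 2.} This is the only nontrivial step, and the mechanism you sketch for it does not work. An automorphism fixing $Bd_{<\xi}$ can carry the extracted array back onto $(c^\eta)_{\eta\le\xi}$ only if the extracted array has the same type over $Bd_{<\xi}$ as the original. Erdős--Rado extraction preserves only EM-type data, and that determines the full type only for configurations that are indiscernible over the fixed set. Here nothing of the sort holds:
\begin{itemize}
\item each $c^\eta$ fails to be indiscernible over $Bd_{<\xi}$, since its first term contains $d_{<\eta}$;
\item Step 1 chooses $c^\xi$ with no regard to the earlier sequences, so the joint type of $c^\eta$ and $c^{\eta'}$ is uncontrolled, and even ``stretching while keeping the joint type'' has no content.
\end{itemize}

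Handling the constraints one $\eta$ at a time also fails. Once $d$ works for $c^\eta$, you may only move it by automorphisms fixing $Bd_{(\eta,\xi)}c^\eta$. To then make $c^{\eta'}$ ($\eta'>\eta$) indiscernible over the moved $d$, you would need $c^{\eta'}$ to be indiscernible over $Bd_{(\eta,\xi)}c^\eta$. This fails in general, because $d_{(\eta,\eta')}$ lies in $c^{\eta'}_0$. The reverse order fails for the same reason, since $c^{\eta'}_0\supseteq d_{<\eta}$ and $d_{<\eta}\subseteq c^\eta_0$.

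Your fallback of realigning tails does not remove the difficulty. All the realignments must be achieved with one and the same $d_\xi$, which is exactly the open simultaneous-consistency question. It also modifies already-built objects cofinally often, so limit stages need an argument you do not supply. As it stands, nothing in your construction shows that the required $d_\xi$ exists.

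\textbf{The missing idea} is to obtain the whole configuration from one coherent homogeneous object, instead of holding earlier pieces fixed while adjusting later ones. The paper takes Hanson's $s$-indiscernible, bu-spread-out tree $(t_f)_{f\in\mathcal T_\mu}$ over $B$ with $t_f\equiv_B a$. It sets $d_\xi=t_{f_\xi}$, where $f_\xi$ is identically zero on $[\xi+2,\mu)$. It takes $c^\xi_i$ to be the subtree at the child $g^\xi_i=f_\xi\cup\{(\xi+1,i)\}$. Then:
\begin{itemize}
\item $d_{<\xi}\subseteq c^\xi_0$, because $f_\eta\supseteq g^\xi_0$ for $\eta<\xi$;
\item indiscernibility over $Bd_{>\xi}$ is an instance of $s$-indiscernibility, because each $f_\eta$ with $\eta>\xi$ is a restriction (an ancestor) of every $g^\xi_i$;
\item bu-Morleyness is exactly bu-spread-outness.
\end{itemize}
In this picture $c^\xi_0$ contains the entire earlier configuration, including all earlier $c^\eta$. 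All the required indiscernibilities hold at once by a single global homogeneity property, which is precisely the simultaneous control your Step 2 lacks.
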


\begin{proof}
Let $\mathcal T_\mu$ be the tree of finitely supported functions
$f:[\beta,\mu)\to\omega$, where $\beta<\mu$ is zero or a successor
ordinal, ordered by extension. The tree construction in \cite{HansonBU} gives
an $s$-indiscernible, bu-spread-out family
$(t_f)_{f\in\mathcal T_\mu}$ over $B$, with $t_f\equiv_Ba$ for every
$f$.

For each $\xi<\mu$, let $f_\xi$ be identically zero on $[\xi+2,\mu)$,
and put $d_\xi=t_{f_\xi}$. For $i<\omega$, set
$g_i^\xi=f_\xi\cup\{(\xi+1,i)\}$ and let $c_i^\xi$ be the
concatenation of the tuples $t_g$ for $g\in\mathcal T_\mu$ with
$g\supseteq g_i^\xi$.
Use enumerations matched by the map that changes only the value at
$\xi+1$. The domains of $f_\xi$ and $g_i^\xi$ start at successor
ordinals, so both are nodes of $\mathcal T_\mu$.
Then $(c_i^\xi)_{i<\omega}$ is bu-Morley over $B$.

If $\eta<\xi$, then $f_\eta\supseteq g_0^\xi$, so every coordinate of
$d_\eta$ occurs in $c_0^\xi$. If $\eta>\xi$, then $f_\eta$ is a restriction
of all the subtrees rooted at $g_i^\xi$. The $s$-indiscernibility
then$(c_i^\xi)_{i<\omega}$ indiscernible over $Bd_{>\xi}$.
This also holds for the whole subtree tuples, since each formula uses
only finitely many coordinates.
\end{proof}

\begin{theorem}\label{thm:totalbu}
Suppose $T$ has bounded $k$-splitting for some $1\leq k<\omega$.
For every parameter set $B$ and every possibly infinite tuple $a$,
there is a total bu-Morley sequence $(a_i)_{i<\omega}$ over $B$
with $a_0=a$.
\end{theorem}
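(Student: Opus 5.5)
We combine Lemma~\ref{lem:butree} with the ZFC extraction result, Theorem~\ref{thm:basic}, in place of Hanson's use of a $k$-ineffable cardinal. Let $\gamma=|a|$ and fix an infinite $\lambda\geq|T|+|B|+|\gamma|$. Put $\mu=\beth_k(\lambda)^+$. Apply Lemma~\ref{lem:butree} with this $\mu$ to obtain $(d_\xi)_{\xi<\mu}$ with $d_\xi\equiv_Ba$, together with bu-Morley sequences $(c^\xi_i)_{i<\omega}$ satisfying \eqref{eq:treeinput}.

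By Theorem~\ref{thm:basic}, there is a $B$-indiscernible subsequence $(d_{\xi_\alpha})_{\alpha<\lambda^+}$. Only an infinite indiscernible subsequence is needed, so the length loss is harmless. Choose a splitting point $\alpha_0$ (say $\omega$) and let $I=(d_{\xi_\alpha})_{\alpha<\omega}$ and $J=(d_{\xi_\alpha})_{\omega\leq\alpha<\omega\cdot2}$. Put $\xi=\xi_\omega$.

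The key claim is that $I\indbu_B J$. By \eqref{eq:treeinput}, every coordinate of $I$ lies in $c^{\xi}_0$. Also, $(c^\xi_i)_{i<\omega}$ is bu-Morley over $B$ and indiscernible over $Bd_{>\xi}$. Let $J'=(d_{\xi_\alpha})_{\omega<\alpha<\omega\cdot2}$; this is $J$ with its first element removed. Fact~\ref{fact:chain} with $e=J'$ gives $c^\xi_0\indbu_B J'$, and monotonicity yields $I\indbu_B J'$.

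To handle the missing element $d_\xi$, I would shift the split. Take $I=(d_{\xi_\alpha})_{\alpha<\omega}$ and $J=(d_{\xi_\alpha})_{\omega<\alpha<\omega\cdot2}$, and use $\xi=\xi_\omega$ only as the witness index, so that $d_\xi$ itself sits strictly between $I$ and $J$ and belongs to neither. Then $I\subseteq\operatorname{rng}(c^\xi_0)$ and $J\subseteq d_{>\xi}$, so the argument above gives $I\indbu_BJ$ directly. Since the whole sequence $(d_{\xi_\alpha})$ is $B$-indiscernible, $I+J$ is a $B$-indiscernible sequence of order type $\omega+\omega$. It therefore has the same EM-type over $B$ as its initial $\omega$-segment $(a'_i)_{i<\omega}=(d_{\xi_i})_{i<\omega}$. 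Fact~\ref{fact:totality} then shows that $(a'_i)_{i<\omega}$ is total bu-Morley over $B$.

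It remains to arrange $a_0=a$. Since $a'_0=d_{\xi_0}\equiv_Ba$, choose $\sigma\in\Aut(\C/B)$ with $\sigma(a'_0)=a$, and set $a_i=\sigma(a'_i)$. Indiscernibility, $\indbu_B$, and EM-types over $B$ are all $\Aut(\C/B)$-invariant, so totality transfers. One should check that the characterization in Fact~\ref{fact:totality} is stated up to EM-type and hence invariant, which it is.

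The main obstacle I expect is verifying the hypotheses of Fact~\ref{fact:chain} precisely. The sequence must be indiscernible over $BJ$ as a set-sized tuple, and the containment $I\subseteq\operatorname{rng}(c_0^\xi)$ must be literal rather than merely up to enumeration. Both are supplied by \eqref{eq:treeinput}, but the infinite-tuple case of $a$ requires that Theorem~\ref{thm:basic} be applied with $\gamma=|a|$, which is why $\lambda$ must dominate $|a|$. Apart from this, the argument only uses that some infinite indiscernible subsequence exists among $\mu$ many $d_\xi$, which is exactly what Theorem~\ref{thm:basic} provides in ZFC.
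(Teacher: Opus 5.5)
Your proposal is correct and follows the paper's proof essentially step for step. It applies Lemma~\ref{lem:butree} with $\mu=\beth_k(\lambda)^+$, extracts via Theorem~\ref{thm:basic}, takes $I$ as the first $\omega$ terms, and skips the witness $d_{\xi_\omega}$ so that $J=(d_{\xi_{\omega+1+j}})_{j<\omega}$ lies in $d_{>\xi_\omega}$; it then uses Fact~\ref{fact:chain}, monotonicity, Fact~\ref{fact:totality}, and an automorphism over $B$ sending $d_{\xi_0}$ to $a$. The preliminary detour through the version of $J$ containing $d_{\xi_\omega}$ is unnecessary, since your final choice of $J$ is exactly the paper's.
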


\begin{proof}
Choose an infinite $\lambda\geq{|T|}+|B|+|a|$ and put
$\mu=\beth_k(\lambda)^+$. Theorem~\ref{thm:basic} gives extraction of
order type $\lambda^+$, hence of order type $\omega+\omega$, over
$B$ for tuples of length $|a|$.

Choose the tuples supplied by Lemma~\ref{lem:butree} and extract a
$B$-indiscernible subsequence $(d_{\xi_j})_{j<\omega+\omega}$, where
the $\xi_j$ strictly increase. Put
$$
 I=(d_{\xi_j})_{j<\omega},\qquad
 \rho=\xi_\omega,\qquad
 J=(d_{\xi_{\omega+1+j}})_{j<\omega}.
$$
Then $I+J$ is $B$-indiscernible. Every coordinate of $I$ occurs
in $c_0^\rho$, and $(c_i^\rho)_{i<\omega}$ is indiscernible over
$BJ$. Fact~\ref{fact:chain}, applied with one tuple enumerating the
whole of $J$, gives $c_0^\rho\indbu_BJ$.
Monotonicity gives $I\indbu_BJ$, and Fact~\ref{fact:totality}
shows that $I$ is total bu-Morley over $B$.
Finally, $d_{\xi_0}\equiv_Ba$: an automorphism fixing $B$ sends
$d_{\xi_0}$ to $a$ and sends $I$ to the required sequence.
\end{proof}

\end{document}